\newtheorem{thm}{Theorem}[section]
 \journalname{my journal}
\begin{document}
\title{Global  existence results and duality for  non-linear models of plates and shells}

\titlerunning{On existence and duality for a plate model}        

\author{Fabio Silva Botelho }


\institute{Fabio Silva Botelho \at Department of Mathematics \\
              Federal University of Santa Catarina, SC - Brazil \\
              Tel.: +55-48-3721-3663\\
              \email{fabio.botelho@ufsc.br}  }        


\maketitle

\begin{abstract}
In this article firstly we develop a new proof for global existence of minimizers for the Kirchhoff-Love plate model.

We also present a duality principle and relating sufficient optimality conditions for  such a  variational plate model.

In a second step, we present a global existence result for a non-linear model of shells. For this model, we also develop a duality principle
and concerning sufficient conditions of optimality. \\
\keywords{Global existence \and Sufficient optimality conditions \and Duality} 
\subclass{ 49J10 \and  49N15}
\end{abstract}

\section{Introduction} In the present work, in a first step,  we develop  a new existence proof and  a dual variational formulation for the Kirchhoff-Love thin plate model. Previous results on existence in mathematical elasticity and related models may be found in \cite{903,[3],[4]}.

At this point we refer to the exceptionally important article "A contribution to contact problems for a class of solids and structures" by
W.R. Bielski and J.J. Telega, \cite{85},  published in 1985,  as the first one to successfully  apply and generalize the convex analysis approach to a model in non-convex and non-linear mechanics.

The present work is, in some sense, a kind of extension  of this previous work \cite{85} and others such as \cite{2900}, which greatly influenced and
inspired my work and recent book \cite{120}.

Here we highlight that such earlier results establish the complementary energy under the hypothesis of positive definiteness of the membrane force tensor at a critical point (please see  \cite{85,2900,77} for details).

We have obtained a dual variational formulation which allows the global optimal point in question not to be positive definite (for  related results see F.Botelho \cite{120}), but also not necessarily negative definite. The approach developed also includes sufficient conditions of optimality for the primal problem.
It is worth mentioning that the standard tools of convex analysis used in this text may be found in \cite{[6],120,29}, for example.

At this point we start to describe the primal formulation.

    Let $\Omega\subset\mathbb{R}^2$ be an open, bounded, connected set  which  represents the middle surface of a plate
of thickness $h$. The boundary of $\Omega$, which is assumed to be regular (Lipschitzian), is
denoted by $\partial \Omega$. The vectorial basis related to the cartesian
system $\{x_1,x_2,x_3\}$ is denoted by $( \textbf{a}_\alpha,
\textbf{a}_3)$, where $\alpha =1,2$ (in general Greek indices stand
for 1 or 2), and where $\textbf{a}_3$ is the vector normal to $\Omega$, whereas $\textbf{a}_1$ and $\textbf{a}_2$ are orthogonal vectors parallel to $\Omega.$  Also,
$\textbf{n}$ is
the outward normal to the plate surface.

    The displacements will be denoted by
$$ \hat{\textbf{u}}=\{\hat{u}_\alpha,\hat{u}_3\}=\hat{u}_\alpha
\textbf{a}_\alpha+ \hat{u}_3 \textbf{a}_3.
$$

The Kirchhoff-Love relations are
\begin{eqnarray}&&\hat{u}_\alpha(x_1,x_2,x_3)=u_\alpha(x_1,x_2)-x_3
w(x_1,x_2)_{,\alpha}\; \nonumber \\ && \text{ and }
\hat{u}_3(x_1,x_2,x_3)=w(x_1,x_2).
\end{eqnarray}

Here $ -h/2\leq x_3 \leq h/2$ so that we have
$u=(u_\alpha,w)\in U$ where
\begin{eqnarray}
U&=&\left\{(u_\alpha,w)\in W^{1,2}(\Omega; \mathbb{R}^2) \times
W^{2,2}(\Omega) , \right.\nonumber \\ &&\; u_\alpha=w=\frac{\partial w}{\partial \textbf{n}}=0 \left.
\text{ on } \partial \Omega \right\} \nonumber \\ &=&W_0^{1,2}(\Omega; \mathbb{R}^2) \times W_0^{2,2}(\Omega).\nonumber\end{eqnarray}
It is worth emphasizing that the boundary conditions here specified refer to a clamped plate.

We define the operator $\Lambda: U \rightarrow Y \times Y$, where $Y=Y^*=L^2(\Omega; \mathbb{R}^{2 \times 2})$, by
$$\Lambda(u)=\{\gamma(u), \kappa(u)\},$$
$$\gamma_{\alpha\beta}(u)= \frac{u_{\alpha,\beta}+u_{\beta,\alpha}}{2}+\frac{w_{,\alpha} w_{,\beta}}{2},$$
$$\kappa_{\alpha \beta}(u)=-w_{,\alpha \beta}.$$
The constitutive relations are given by
\begin{equation}
N_{\alpha\beta}(u)=H_{\alpha\beta\lambda\mu} \gamma_{\lambda\mu}(u),
\end{equation}
\begin{equation}
M_{\alpha\beta}(u)=h_{\alpha\beta\lambda\mu} \kappa_{\lambda\mu}(u),
\end{equation}
where: $\{H_{\alpha\beta\lambda\mu}\}$
 and
 $\{h_{\alpha\beta\lambda\mu}=\frac{h^2}{12}H_{\alpha\beta\lambda\mu}\}$,
 are symmetric positive definite   fourth order tensors. From now on, we denote $\{\overline{H}_{\alpha\beta \lambda \mu}\}=\{H_{\alpha\beta \lambda \mu}\}^{-1}$ and $\{\overline{h}_{\alpha\beta \lambda \mu}\}=\{h_{\alpha\beta \lambda \mu}\}^{-1}$.

 Furthermore
 $\{N_{\alpha\beta}\}$ denote the membrane force tensor and
 $\{M_{\alpha\beta}\}$ the moment one.
    The plate stored energy, represented by $(G\circ
    \Lambda):U\rightarrow\mathbb{R}$ is expressed by
 \begin{equation}\label{80} (G\circ\Lambda)(u)=\frac{1}{2}\int_{\Omega}
  N_{\alpha\beta}(u)\gamma_{\alpha\beta}(u)\;dx+\frac{1}{2}\int_{\Omega}
  M_{\alpha\beta}(u)\kappa_{\alpha\beta}(u)\;dx
  \end{equation}
 and the external work, represented by $F:U\rightarrow\mathbb{R}$, is given by
   \begin{equation}\label{81} F(u)=\langle w,P \rangle_{L^2(\Omega)}+\langle u_\alpha,P_\alpha \rangle_{L^2(\Omega)}
,\end{equation} where $P, P_1, P_2 \in L^2(\Omega)$ are external loads in the directions $\textbf{a}_3$, $\textbf{a}_1$ and $\textbf{a}_2$ respectively. The potential energy, denoted by
$J:U\rightarrow\mathbb{R}$ is expressed by:
$$
J(u)=(G\circ\Lambda)(u)-F(u)
$$

Finally, we also emphasize from now on, as their meaning are clear, we may denote $L^2(\Omega)$ and $L^2(\Omega; \mathbb{R}^{2 \times 2})$ simply by $L^2$, and the respective norms by $\|\cdot \|_2.$ Moreover derivatives are always understood in the distributional sense, $\mathbf{0}$ may denote the zero vector in  appropriate Banach spaces and, the following and relating notations are used:
$$w_{,\alpha\beta}=\frac{\partial^2 w}{\partial x_\alpha \partial x_\beta},$$
$$u_{\alpha,\beta}=\frac{\partial u_\alpha}{\partial x_\beta},$$
$$N_{\alpha\beta,1}=\frac{\partial N_{\alpha\beta}}{\partial x_1},$$
and
$$N_{\alpha\beta,2}=\frac{\partial N_{\alpha\beta}}{\partial x_2}.$$
\section{On the existence of a global minimizer}

At this point we present an existence result concerning the Kirchhoff-Love plate model.

We start with the following two remarks.
\begin{remark}\label{us123W}Let $\{P_\alpha\} \in L^\infty(\Omega;\mathbb{R}^2)$. We may easily obtain by appropriate Lebesgue integration $\{\tilde{T}_{\alpha \beta}\}$ symmetric and such that

 $$\tilde{T}_{\alpha\beta,\beta}=-P_\alpha, \text{ in } \Omega.$$

 Indeed, extending $\{P_\alpha\}$ to zero outside $\Omega$ if necessary, we may set
 $$\tilde{T}_{11}(x,y)=-\int_0^xP_1(\xi,y)\;d\xi,$$
  $$\tilde{T}_{22}(x,y)=-\int_0^yP_2(x,\xi)\;d\xi,$$
  and $$\tilde{T}_{12}(x,y)=\tilde{T}_{21}(x,y)=0, \text{ in } \Omega.$$

 Thus, we may choose a $C>0$ sufficiently big, such that $$\{T_{\alpha\beta}\}=\{\tilde{T}_{\alpha\beta} +C \delta_{\alpha\beta}\}$$ is positive definite $\text{ in } \Omega$, so that

 $$T_{\alpha\beta,\beta}=\tilde{T}_{\alpha\beta,\beta}=-P_\alpha,$$

 where $$\{\delta_{\alpha\beta}\}$$ is the Kronecker delta.

 So, for the kind of boundary conditions of the next theorem, we do NOT have any restriction for the $\{P_\alpha\}$ norm.

 Summarizing, the next result is new and it is really a step forward concerning the previous one in Ciarlet \cite{[3]}. We emphasize this result and
 its proof through such a tensor $\{T_{\alpha\beta}\}$ are new, even though the final part of the proof is established through a standard procedure in
 the calculus of variations.

 About the other existence result for plates, its proof through the tensor well specified $\{(T_0)_{\alpha\beta}\}$ is also new, even though the final  part
 of such a proof  is also performed through a standard procedure.

 A similar remark is valid for the existence result for the model of shells, which is also established through a tensor $T_0$ properly specified.

 Finally, the duality principles and concerning optimality conditions are established through new functionals. Similar results may be found
 in \cite{120}.
\end{remark}

\begin{remark} Specifically about the existence of the tensor $T_0$ relating Theorem \ref{us123a}, we recall the following well known duality principle
of the calculus of variations
\begin{eqnarray} &&\inf_{T=\{T_{\alpha\beta}\} \in B^*} \left\{\frac{1}{2}\|T\|_2^2 \right\}\nonumber \\ &=&
\sup_{ \{u_\alpha\} \in \tilde{U}} \left\{ -\frac{1}{2}\int_\Omega \nabla u_\alpha \cdot \nabla u_\alpha \;dx +\langle u_\alpha, P_\alpha\rangle_{L^2(\Omega)}
+\langle u_\alpha, P^t_\alpha\rangle_{L^2(\Gamma_t)}\right\}.
\end{eqnarray}
Here $$B^*=\{T \in L^2(\Omega;\mathbb{R}^2)\;:\; T_{\alpha\beta, \beta}+P_\alpha=0, \; \text{ in } \Omega,\; T_{\alpha\beta}n_\beta-P^t_\alpha=0, \text{ on } \Gamma_t\},$$
and
$$\tilde{U}=\{\{u_\alpha\} \in W^{1,2}(\Omega;\mathbb{R}^2)\;:\; u_\alpha=0 \text{ on } \Gamma_0\}.$$

We also recall that the existence of a unique solution for both these primal and dual convex formulations is a well known result of the duality theory in the calculus of variations. Please, see related results in \cite{[6]}.

A similar duality principle may be established for the case of Theorem \ref{us123b}.
\end{remark}
\begin{thm}\label{usp10}Let $\Omega \subset \mathbb{R}^2$ be an open, bounded, connected set with a Lipschitzian boundary denoted by $\partial \Omega=\Gamma.$ Suppose $(G \circ \Lambda):U \rightarrow \mathbb{R}$ is defined by
$$G(\Lambda u)=G_1(\gamma (u))+G_2(\kappa (u)), \; \forall u \in U,$$
where
$$G_1(\gamma u)= \frac{1}{2}\int_\Omega H_{\alpha\beta\lambda\mu} \gamma_{\alpha\beta}(u)\gamma_{\lambda \mu}(u)\;dx,$$
and
$$G_2(\kappa u)=\frac{1}{2}\int_\Omega h_{\alpha\beta\lambda\mu}\kappa_{\alpha\beta}(u)\kappa_{\lambda \mu}(u)\;dx,$$
where
$$\Lambda(u)=(\gamma(u),\kappa(u))=(\{\gamma_{\alpha\beta}(u)\},\{\kappa_{\alpha\beta}(u)\}),$$
$$\gamma_{\alpha\beta}(u)=\frac{u_{\alpha,\beta}+u_{\beta,\alpha}}{2}+\frac{w_{,\alpha} w_{,\beta}}{2},$$
$$\kappa_{\alpha\beta}(u)=-w_{,\alpha\beta},$$
and where
$$U=\{u=(u_1,u_2,w)\in W^{1,2}(\Omega;\mathbb{R}^2) \times W^{2,2}(\Omega) \;:\; u_1=u_2=w=0 \text{ on } \partial \Omega\}.$$

We also define,
\begin{eqnarray} F_1(u)&=& \langle w,P\rangle_{L^2}+\langle u_\alpha, P_\alpha \rangle_{L^2} \nonumber \\
&\equiv& \langle u,\mathbf{f}\rangle_{L^2}, \end{eqnarray}
where
$$\mathbf{f}=(P_\alpha,P) \in L^\infty(\Omega;\mathbb{R}^3).$$

Let $J:U \rightarrow \mathbb{R}$ be defined by
$$J(u)=G(\Lambda u)-F_1(u),\; \forall u \in U.$$
Assume there exists $\{c_{\alpha\beta}\} \in \mathbb{R}^{2 \times 2}$ such that $c_{\alpha\beta}>0,\; \forall \alpha,\beta \in \{1,2\}$ and $$G_2(\kappa(u)) \geq c_{\alpha\beta}\|w_{,\alpha\beta}\|_2^2,\; \forall u \in U.$$

Under such hypotheses, there exists $u_0 \in U$ such that
$$J(u_0)=\min_{u \in U} J(u).$$
\end{thm}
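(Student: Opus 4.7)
The plan is to exploit the auxiliary tensor $\{T_{\alpha\beta}\}$ constructed in Remark \ref{us123W} (positive definite, symmetric, with $T_{\alpha\beta,\beta}=-P_\alpha$ in $\Omega$) to convert the apparently unsigned external-work term $\langle u_\alpha,P_\alpha\rangle_{L^2}$ into something that simultaneously controls $\|\nabla w\|_2^2$ and is absorbable by $G_1$. Since $u_\alpha=0$ on $\partial\Omega$, integration by parts together with the symmetry of $\{T_{\alpha\beta}\}$ gives
$$\langle u_\alpha,P_\alpha\rangle_{L^2}=\int_\Omega T_{\alpha\beta}u_{\alpha,\beta}\,dx=\int_\Omega T_{\alpha\beta}\gamma_{\alpha\beta}(u)\,dx-\frac{1}{2}\int_\Omega T_{\alpha\beta}w_{,\alpha}w_{,\beta}\,dx.$$
Substituting into $J$, the quadratic-plus-linear expression $\frac{1}{2}\int_\Omega H_{\alpha\beta\lambda\mu}\gamma_{\alpha\beta}\gamma_{\lambda\mu}\,dx-\int_\Omega T_{\alpha\beta}\gamma_{\alpha\beta}\,dx$ is bounded below by completing the square (using $\{\overline{H}_{\alpha\beta\lambda\mu}\}$) by a constant $-K_1$ depending only on $T$ and $H$, while the positive-definiteness of $\{T_{\alpha\beta}\}$ on the compact set $\overline{\Omega}$ yields $\frac{1}{2}\int_\Omega T_{\alpha\beta}w_{,\alpha}w_{,\beta}\,dx\ge \frac{c_0}{2}\|\nabla w\|_2^2$ for some $c_0>0$.

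Coercivity of $J$ follows by combining these estimates with the hypothesis $G_2(\kappa(u))\geq c_{\alpha\beta}\|w_{,\alpha\beta}\|_2^2$, absorbing $-\langle w,P\rangle_{L^2}$ by Young's inequality and the Poincar\'e/Sobolev norm on $W_0^{2,2}(\Omega)$. Hence on any minimizing sequence $\{u_n\}=\{(u_{1,n},u_{2,n},w_n)\}$ we first extract a bound $\|w_n\|_{W^{2,2}}\leq C$. Positive-definiteness of $H$ and the lower bound above then yield $\|\gamma_{\alpha\beta}(u_n)\|_2\le C$. By the Sobolev embedding $W^{2,2}(\Omega)\hookrightarrow W^{1,4}(\Omega)$ valid in dimension two, $\{w_{n,\alpha}w_{n,\beta}\}$ is bounded in $L^2(\Omega)$, so $\{u_{n,\alpha,\beta}+u_{n,\beta,\alpha}\}$ is bounded in $L^2$; Korn's inequality for fully clamped displacements ($u_\alpha=0$ on $\partial\Omega$) gives $\|u_{\alpha,n}\|_{W^{1,2}}\leq C$.

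To pass to the limit, extract a subsequence (not relabeled) with $u_n\rightharpoonup u_0$ weakly in $U$. The compact embedding $W^{2,2}(\Omega)\hookrightarrow W^{1,4}(\Omega)$ yields $w_{n,\alpha}\to (u_0)_{w,\alpha}$ strongly in $L^4$, hence $w_{n,\alpha}w_{n,\beta}\to w_{0,\alpha}w_{0,\beta}$ strongly in $L^2$, while $\frac{u_{n,\alpha,\beta}+u_{n,\beta,\alpha}}{2}$ converges weakly in $L^2$ to the corresponding symmetric gradient of $u_0$. Therefore $\gamma(u_n)\rightharpoonup \gamma(u_0)$ and $\kappa(u_n)\rightharpoonup \kappa(u_0)$ weakly in $Y=L^2(\Omega;\mathbb{R}^{2\times 2})$. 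The functionals $G_1,G_2$ are convex and continuous on $Y$, hence weakly lower semicontinuous, and $F_1$ is linear and continuous on $U$ (hence weakly continuous). Thus $J(u_0)\leq \liminf_n J(u_n)=\inf_{u\in U}J(u)$, so $u_0$ is the required minimizer.

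The main obstacle I expect is the coercivity step: one must verify that the shift by $\{T_{\alpha\beta}\}$ genuinely produces a positive control on $\|\nabla w\|_2^2$ without leaving behind an uncontrollable cross term in $\gamma(u)$, and that the hypothesis $G_2(\kappa(u))\geq c_{\alpha\beta}\|w_{,\alpha\beta}\|_2^2$ together with the Poincar\'e inequality is enough to also dominate $\langle w,P\rangle_{L^2}$. Once coercivity is in place, the lower-semicontinuity argument is standard because the only nonlinearity, the quadratic $w_{,\alpha}w_{,\beta}$, is handled by a compactness/Sobolev embedding available specifically in two dimensions.
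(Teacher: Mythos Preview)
Your proposal is correct and follows essentially the same route as the paper's proof: both use the auxiliary symmetric positive-definite tensor $\{T_{\alpha\beta}\}$ with $T_{\alpha\beta,\beta}=-P_\alpha$ to rewrite $\langle u_\alpha,P_\alpha\rangle_{L^2}$ via integration by parts, thereby producing the coercive term $\tfrac{1}{2}\langle T_{\alpha\beta},w_{,\alpha}w_{,\beta}\rangle_{L^2}$ and reducing $G_1(\gamma(u))-\langle T_{\alpha\beta},\gamma_{\alpha\beta}(u)\rangle_{L^2}$ to a bounded-below expression; then both extract bounds on $\|w_n\|_{W^{2,2}}$, on $\|\gamma_{\alpha\beta}(u_n)\|_2$, apply Korn's inequality, and finish by weak lower semicontinuity of $G_1,G_2$ together with the compact embedding $W^{2,2}(\Omega)\hookrightarrow W^{1,4}(\Omega)$ to handle the nonlinear term $w_{,\alpha}w_{,\beta}$. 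Your write-up is in fact slightly more explicit than the paper's about why each step works (completing the square in $G_1$, uniform positive-definiteness of $T$ on $\overline{\Omega}$, Young's inequality for $\langle w,P\rangle_{L^2}$), but the underlying argument is the same.
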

\begin{proof} Observe that we may find $\mathbf{T}_\alpha=\{(T_\alpha)_\beta\}$ such that
$$div \mathbf{T}_{\alpha}=T_{\alpha\beta,\beta}=-P_\alpha$$ an also such that $\{T_{\alpha\beta}\}$ is positive definite and symmetric (please, see Remark \ref{us123W}).

Thus defining \begin{equation}\label{a.1}v_{\alpha\beta}(u)= \frac{u_{\alpha,\beta}+u_{\beta,\alpha}}{2}+\frac{1}{2}w_{,\alpha}w_{,\beta},\end{equation} we obtain
\begin{eqnarray}\label{usp1}
J(u)&=&G_1(\{v_{\alpha\beta}(u)\})+G_2(\kappa(u))-\langle P_\alpha,u_\alpha \rangle_{L^2}-\langle w, P\rangle_{L^2} \nonumber \\ &=& G_1(\{v_{\alpha\beta}(u)\})+G_2(\kappa(u))+\langle T_{\alpha\beta,\beta},u_\alpha \rangle_{L^2}-\langle w, P\rangle_{L^2}
\nonumber \\ &=& G_1(\{v_{\alpha\beta}(u)\})+G_2(\kappa(u))-\left\langle T_{\alpha\beta},\frac{u_{\alpha,\beta}+u_{\beta,\alpha}}{2} \right\rangle_{L^2}-\langle w, P\rangle_{L^2} \nonumber \\ &=& G_1(\{v_{\alpha\beta}(u)\})+G_2(\kappa(u))-\left\langle T_{\alpha\beta},v_{\alpha\beta}(u)-\frac{1}{2}w_{,\alpha}w_{,\beta} \right\rangle_{L^2}-\langle w, P\rangle_{L^2} \nonumber \\ &\geq& c_{\alpha\beta}\|w_{,\alpha\beta}\|_2^2+\frac{1}{2}\left\langle T_{\alpha\beta},w_{,\alpha}w_{,\beta} \right\rangle_{L^2}-\langle w, P\rangle_{L^2}+G_1(\{v_{\alpha\beta}(u)\})\nonumber \\ &&-\langle T_{\alpha\beta},v_{\alpha\beta}(u)\rangle_{L^2}.
\end{eqnarray}

From this, since $\{T_{\alpha\beta}\}$ is positive definite,  clearly $J$ is bounded below.

Let $\{u_n\} \in U$ be a minimizing sequence for $J$. Thus there exists $\alpha_1 \in \mathbb{R}$ such that
$$\lim_{n \rightarrow \infty}J(u_n)= \inf_{u \in U} J(u)=\alpha_1.$$

From (\ref{usp1}), there exists $K_1>0$ such that
$$\|(w_n)_{,\alpha\beta}\|_2< K_1,\forall \alpha,\beta \in \{1,2\},\;n \in \mathbb{N}.$$

Therefore, there exists $w_0 \in W^{2,2}(\Omega)$ such that, up to a subsequence not relabeled,
$$(w_n)_{,\alpha\beta} \rightharpoonup (w_0)_{,\alpha\beta},\; \text{ weakly in } L^2,$$
 $\forall \alpha,\beta \in \{1,2\}, \text{ as } n \rightarrow \infty.$

Moreover, also up to a subsequence not relabeled,
\begin{equation}\label{a.2}(w_n)_{,\alpha} \rightarrow (w_0)_{,\alpha},\; \text{ strongly in } L^2  \text{ and } L^4,\end{equation}
 $\forall \alpha, \in \{1,2\}, \text{ as } n \rightarrow \infty.$

Also from (\ref{usp1}), there exists $K_2>0$ such that,
$$\|(v_n)_{\alpha\beta}(u)\|_2< K_2,\forall \alpha,\beta \in \{1,2\},\;n \in \mathbb{N},$$
and thus, from this, (\ref{a.1}) and (\ref{a.2}), we may infer that
there exists $K_3>0$ such that
$$\|(u_n)_{\alpha,\beta}+(u_n)_{\beta,\alpha}\|_2< K_3,\forall \alpha,\beta \in \{1,2\},\;n \in \mathbb{N}.$$

From this and Korn's inequality, there exists $K_4>0$ such that

$$\|u_n\|_{W^{1,2}(\Omega;\mathbb{R}^2)} \leq K_4, \; \forall n \in \mathbb{N}.$$
So, up to a subsequence not relabeled, there exists $\{(u_0)_\alpha\} \in W^{1,2}(\Omega, \mathbb{R}^2),$ such that
$$(u_n)_{\alpha,\beta}+(u_n)_{\beta,\alpha} \rightharpoonup (u_0)_{\alpha,\beta}+(u_0)_{\beta,\alpha},\; \text{ weakly in } L^2,$$
 $\forall \alpha,\beta \in \{1,2\}, \text{ as } n \rightarrow \infty,$
and,
$$(u_n)_{\alpha} \rightarrow (u_0)_{\alpha},\; \text{ strongly in } L^2,$$
 $\forall \alpha \in \{1,2\}, \text{ as } n \rightarrow \infty.$

Moreover, the boundary conditions satisfied by the subsequences are also satisfied for $w_0$ and $u_0$ in a trace sense, so that
 $$u_0=((u_0)_\alpha,w_0) \in U.$$

From this, up to a subsequence not relabeled, we get
$$\gamma_{\alpha\beta}(u_n) \rightharpoonup \gamma_{\alpha\beta}(u_0), \text{ weakly in } L^2,$$
$\forall \alpha,\beta \in \{1,2\},$
and
$$\kappa_{\alpha\beta}(u_n) \rightharpoonup \kappa_{\alpha\beta}(u_0), \text{ weakly in } L^2,$$
$\forall \alpha,\beta \in \{1,2\}.$

Therefore, from the convexity of $G_1$ in $\gamma$ and $G_2$ in $\kappa$ we obtain
\begin{eqnarray}
\inf_{u \in U}J(u)&=& \alpha_1 \nonumber \\ &=& \liminf_{n \rightarrow \infty}J(u_n) \nonumber \\ &\geq&
J(u_0).
\end{eqnarray}

Thus, $$J(u_0)=\min_{u\in U}J(u).$$

The proof is complete.
\end{proof}

\section{Existence of a minimizer for the plate model for a more general case}

At this point we present an existence result for a more general case.
\begin{thm}\label{us123a}Consider the statements and assumptions concerning the plate model described in the last section.

More specifically, consider the functional $J:U \rightarrow \mathbb{R}$ given, as above described, by

\begin{eqnarray}J(u)&=& W(\gamma(u),\kappa(u))-\langle P_\alpha u_\alpha\rangle_{L^2}
\nonumber \\ &&-\langle w,P\rangle_{L^2}-\langle P^t_\alpha, u_\alpha \rangle_{L^2(\Gamma_t)} \nonumber \\&&
-\langle P^t,w \rangle_{L^2(\Gamma_t)},\end{eqnarray}
where,
 \begin{eqnarray} U&=&\{u=(u_\alpha,w)=(u_1,u_2,w) \in W^{1,2}(\Omega; \mathbb{R}^2) \times W^{2,2}(\Omega)\;:
\nonumber \\ && u_\alpha=w= \frac{\partial w}{\partial \mathbf{n}}=0, \text{ on } \Gamma_0\},
\end{eqnarray}
where $\partial \Omega =\Gamma_0 \cup \Gamma_t$ and the Lebesgue measures $$m_{\Gamma}(\Gamma_0 \cap \Gamma_t)=0,$$
and $$m_\Gamma (\Gamma_0)>0.$$

Let $T_0$ be such that,
$$\|T_0\|_2^2=\min_{ T \in L^2(\Omega;\mathbb{R}^{2 \times 2})}\{\|T\|_2^2\},$$
subject to $$T_{\alpha\beta,\beta}+P_\alpha=0 \text{ in }\Omega,$$
$$(T_0)_{\alpha\beta} \mathbf{n}_\beta -P^t_\alpha=0, \text{ on } \Gamma_t.$$
Assume $\|P_\alpha\|_2$ and $\|P^t_\alpha\|$ are small enough so that \begin{equation}\label{e.1}J_1(u) \rightarrow +\infty, \text{ as } \langle w_{,\alpha\beta},w_{,\alpha\beta} \rangle_{L^2}  \rightarrow +\infty,\end{equation}
where
\begin{eqnarray}J_1(u)&=&G_2(\kappa(u))+\frac{1}{2}\langle (T_0)_{\alpha\beta}, w_{,\alpha}w_{,\beta} \rangle_{L^2}
 \nonumber \\ &&-\langle P,w \rangle_{L^2}-\langle P^t,w \rangle_{L^2(\Gamma_t)}.\end{eqnarray}

Under such hypotheses, there exists $u_0 \in U$ such that,
$$J(u_0)=\min_{u \in U} \{J(u)\}.$$
\end{thm}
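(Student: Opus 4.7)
The plan is to adapt the strategy used in the proof of Theorem~\ref{usp10}, substituting for the ad-hoc auxiliary tensor the variationally optimal $T_0$ supplied by the dual problem recalled in the preceding remark, so that both the body load on $\Omega$ and the traction load on $\Gamma_t$ are absorbed simultaneously through a single integration by parts.

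First I would rewrite the load work in terms of the strain. Using the equilibrium identities $(T_0)_{\alpha\beta,\beta}+P_\alpha=0$ in $\Omega$ and $(T_0)_{\alpha\beta}\mathbf{n}_\beta=P^t_\alpha$ on $\Gamma_t$, together with the boundary condition $u_\alpha=0$ on $\Gamma_0$ and the symmetry of $T_0$, I obtain
\[
\langle P_\alpha,u_\alpha\rangle_{L^2}+\langle P^t_\alpha,u_\alpha\rangle_{L^2(\Gamma_t)}=\langle (T_0)_{\alpha\beta},\gamma_{\alpha\beta}(u)\rangle_{L^2}-\tfrac{1}{2}\langle (T_0)_{\alpha\beta},w_{,\alpha}w_{,\beta}\rangle_{L^2},
\]
since $\tfrac{1}{2}(u_{\alpha,\beta}+u_{\beta,\alpha})=\gamma_{\alpha\beta}(u)-\tfrac{1}{2}w_{,\alpha}w_{,\beta}$. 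This yields the decomposition
\[
J(u)=\bigl[G_1(\gamma(u))-\langle (T_0)_{\alpha\beta},\gamma_{\alpha\beta}(u)\rangle_{L^2}\bigr]+J_1(u),
\]
whose first bracket is bounded below by $-\tfrac{1}{2}\int_\Omega \overline{H}_{\alpha\beta\lambda\mu}(T_0)_{\alpha\beta}(T_0)_{\lambda\mu}\,dx$ (completing the square against the positive definite tensor $\{H_{\alpha\beta\lambda\mu}\}$), while $J_1$ is controlled by hypothesis~(\ref{e.1}).

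For a minimizing sequence $\{u_n\}$, the lower bound on the bracket forces $J_1(u_n)$ to be bounded above, and the coercivity assumption~(\ref{e.1}) then yields $\|(w_n)_{,\alpha\beta}\|_2$ bounded; combined with the clamped conditions on $\Gamma_0$ (which has positive measure), this gives $w_n\rightharpoonup w_0$ weakly in $W^{2,2}(\Omega)$, with $(w_n)_{,\alpha}\to (w_0)_{,\alpha}$ strongly in $L^2$ and in $L^4$ by Rellich--Kondrachov. Boundedness of $J(u_n)$ and of $J_1(u_n)$ then bounds the bracket as well, whence the positive definiteness of $G_1$ gives $\|\gamma_{\alpha\beta}(u_n)\|_2$ bounded; the $L^4$-control on $(w_n)_{,\alpha}$ thereby bounds $\|(u_n)_{\alpha,\beta}+(u_n)_{\beta,\alpha}\|_2$, and Korn's inequality (applicable because $u_\alpha=0$ on $\Gamma_0$ with $m_\Gamma(\Gamma_0)>0$) gives $\{u_n\}$ bounded in $W^{1,2}(\Omega;\mathbb{R}^2)$.

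Finally, extracting a subsequence with $(u_n)_\alpha\rightharpoonup (u_0)_\alpha$ in $W^{1,2}$ and strongly in $L^2$, the compatibility of the trace operator with weak convergence yields $u_0=((u_0)_\alpha,w_0)\in U$. The strong $L^4$-convergence of $(w_n)_{,\alpha}$ passes the nonlinear term to the limit, giving $\gamma_{\alpha\beta}(u_n)\rightharpoonup \gamma_{\alpha\beta}(u_0)$ in $L^2$; weak $L^2$-convergence of the second derivatives gives $\kappa_{\alpha\beta}(u_n)\rightharpoonup \kappa_{\alpha\beta}(u_0)$; weak lower semicontinuity of the convex quadratics $G_1,G_2$ combined with continuity of the linear load terms under the available convergences delivers $J(u_0)\le \liminf_n J(u_n)=\inf_{u\in U}J(u)$, which is the conclusion. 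The main obstacle is not the compactness phase, which is routine once the decomposition is in hand, but rather the transfer of the load work through $T_0$: its utility hinges on the smallness condition packaged in~(\ref{e.1}), which guarantees that the sign-indefinite quadratic $\tfrac{1}{2}\langle (T_0)_{\alpha\beta},w_{,\alpha}w_{,\beta}\rangle_{L^2}$ does not overwhelm the coercive $G_2(\kappa(u))$ contribution inside $J_1$.
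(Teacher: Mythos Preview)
Your proof is correct and follows essentially the same route as the paper: the same integration-by-parts transfer of the in-plane loads through $T_0$ (yielding the decomposition $J(u)=J_1(u)+G_1(\gamma(u))-\langle (T_0)_{\alpha\beta},\gamma_{\alpha\beta}(u)\rangle_{L^2}$), the same use of hypothesis~(\ref{e.1}) to bound $\|(w_n)_{,\alpha\beta}\|_2$, and the same Korn-plus-weak-lower-semicontinuity conclusion. Your write-up is in fact slightly more explicit than the paper's in identifying the lower bound on the $G_1$ bracket via completion of the square and in invoking Rellich--Kondrachov for the $L^4$ compactness needed to pass the nonlinear term to the limit.
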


\begin{proof}

Observe that defining \begin{equation}\label{a.3}v_{\alpha\beta}(u)= \frac{u_{\alpha,\beta}+u_{\beta,\alpha}}{2}+\frac{1}{2}w_{,\alpha}w_{,\beta},\end{equation}
we have,

\begin{eqnarray}\label{usp90}
J(u)&=&  G_1(v(u))+G_2(\kappa (u))\nonumber \\ &&-\langle P_\alpha,u_\alpha \rangle_{L^2}-\langle P,w \rangle_{L^2}
\nonumber \\ && -\langle P^t_\alpha, u_\alpha \rangle_{L^2(\Gamma_t)}-\langle P^t,w \rangle_{L^2(\Gamma_t)}
\nonumber \\ &=& G_1(v(u))+G_2(\kappa(u))+\langle (T_0)_{\alpha\beta,\beta},u_\alpha \rangle_{L^2}
\nonumber \\ &&-\langle P,w \rangle_{L^2}
\nonumber \\ && -\langle P^t_\alpha, u_\alpha \rangle_{L^2(\Gamma_t)}-\langle P^t,w \rangle_{L^2(\Gamma_t)}
 \nonumber \\ &=& G_1(v(u))+G_2(\kappa(u))-\left\langle (T_0)_{\alpha\beta},\frac{u_{\alpha,\beta}+u_{\beta,\alpha}}{2} \right\rangle_{L^2}
\nonumber \\ && +\langle (T_0)_{\alpha\beta}\mathbf{n}_\beta, u_\alpha \rangle_{L^2(\Gamma_t)}-\langle P,w \rangle_{L^2}
\nonumber \\ && -\langle P^t_\alpha, u_\alpha \rangle_{L^2(\Gamma_t)}-\langle P^t,w \rangle_{L^2(\Gamma_t)}
\nonumber \\ &=& G_1(v(u))+G_2(\kappa(u))-\left\langle (T_0)_{\alpha\beta},\frac{u_{\alpha,\beta}+u_{\beta,\alpha}}{2} \right\rangle_{L^2}
\nonumber \\ &&-\langle P,w \rangle_{L^2}
-\langle P^t,w \rangle_{L^2(\Gamma_t)} \nonumber \\ &=&G_1(v(u))+G_2(\kappa(u))-\left\langle (T_0)_{\alpha\beta},
v_{\alpha\beta}(u)-\frac{1}{2} w_{,\alpha}w_{,\beta} \right\rangle_{L^2}
\nonumber \\ &&-\langle P,w \rangle_{L^2}
-\langle P^t,w \rangle_{L^2(\Gamma_t)} \nonumber \\ &=& J_1(u)+G_1(v(u))-\langle (T_0)_{\alpha\beta}, v_{\alpha\beta}(u) \rangle_{L^2}
\end{eqnarray}

From this and the  hypothesis (\ref{e.1}), $J$ is bounded below. So, there exists $\alpha_1 \in \mathbb{R}$ such that
$$\alpha_1=\inf_{u \in U} J(u).$$

Let $\{u_n\}$ be a minimizing sequence for $J$.

From (\ref{usp90}) and also from the hypothesis (\ref{e.1}), $\{\|w_n\|_{2,2}\}$  is bounded.

So there exists $w_0 \in U$  such that, up to a not relabeled subsequence,
$$(w_n)_{,\alpha\beta} \rightharpoonup (w_0)_{,\alpha\beta}
\text{ weakly in } L^2(\Omega),\; \forall \alpha,\beta \in \{1,2\},$$
\begin{equation}\label{a.4}(w_n)_{,\alpha} \rightarrow (w_0)_{,\alpha}
\text{ strongly in } L^2(\Omega),\; \forall \alpha\in \{1,2\},\end{equation}

Also from (\ref{usp1}), there exists $K_2>0$ such that,
$$\|(v_n)_{\alpha\beta}(u)\|_2< K_2,\forall \alpha,\beta \in \{1,2\},\;n \in \mathbb{N},$$
and thus, from this, (\ref{a.3}) and (\ref{a.4}), we may infer that
there exists $K_3>0$ such that
$$\|(u_n)_{\alpha,\beta}(u)+(u_n)_{\beta,\alpha}(u)\|_2< K_3,\forall \alpha,\beta \in \{1,2\},\;n \in \mathbb{N}.$$
From this and Korn's inequality, there exists $K_4>0$ such that

$$\|u_n\|_{W^{1,2}(\Omega;\mathbb{R}^2)} \leq K_4, \; \forall n \in \mathbb{N}.$$
So, up to a subsequence not relabeled, there exists $\{(u_0)_\alpha\} \in W^{1,2}(\Omega, \mathbb{R}^2),$ such that
$$(u_n)_{\alpha,\beta}+(u_n)_{\beta,\alpha} \rightharpoonup (u_0)_{\alpha,\beta}+(u_0)_{\beta,\alpha},\; \text{ weakly in } L^2,$$
 $\forall \alpha,\beta \in \{1,2\}, \text{ as } n \rightarrow \infty,$
and,
$$(u_n)_{\alpha} \rightarrow (u_0)_{\alpha},\; \text{ strongly in } L^2,$$
 $\forall \alpha \in \{1,2\}, \text{ as } n \rightarrow \infty.$

Moreover, the boundary conditions satisfied by the subsequences are also satisfied for $w_0$ and $u_0$ in a trace sense, so that
 $$u_0=((u_0)_\alpha,w_0) \in U.$$

From this, up to a subsequence not relabeled, we get
$$\gamma_{\alpha\beta}(u_n) \rightharpoonup \gamma_{\alpha\beta}(u_0), \text{ weakly in } L^2,$$
$\forall \alpha,\beta \in \{1,2\},$
and
$$\kappa_{\alpha\beta}(u_n) \rightharpoonup \kappa_{\alpha\beta}(u_0), \text{ weakly in } L^2,$$
$\forall \alpha,\beta \in \{1,2\}.$

Therefore, from the convexity of $G_1$ in $\gamma$ and $G_2$ in $\kappa$ we obtain
\begin{eqnarray}
\inf_{u \in U}J(u)&=& \alpha_1 \nonumber \\ &=& \liminf_{n \rightarrow \infty}J(u_n) \nonumber \\ &\geq&
J(u_0).
\end{eqnarray}

Thus, $$J(u_0)=\min_{u\in U}J(u).$$

The proof is complete.

\end{proof}

\section{The main duality principle}

 In this section we present a duality principle for the plate model in question.

 For such a result, we emphasize the dual variational formulation
 is concave.
\begin{remark} In the proofs relating our duality principles we apply a very well known result found in Toland \cite{12}.

Indeed, for $$\{N_{\alpha\beta}\} \in L^2(\Omega; \mathbb{R}^{2 \times 2}),$$ assume
$$F_5(w)-G_5(\{w_\alpha\})>0,\; \forall w \in W_0^{2,2}(\Omega) \; \text{ such that } w \neq \mathbf{0},$$
where here $$F_5(w)=\frac{1}{2}\int_\Omega h_{\alpha\beta\lambda \mu} w_{,\alpha\beta} w_{,\lambda \mu}\;dx + \frac{K}{2} \langle w_{,\alpha}, w_{,\alpha} \rangle_{L^2},$$
and
$$G_5(\{w_\alpha\})=-\frac{1}{2}\int_\Omega N_{\alpha\beta} w_{,\alpha}w_{,\beta}\;dx+ \frac{K}{2} \langle w_{,\alpha}, w_{,\alpha} \rangle_{L^2},$$
where $K>0$ is supposed to be sufficiently big so that $G_5$ is convex in $w$.

Thus,
\begin{eqnarray}F_5(w)-G_5(\{w_\alpha\})&=&\frac{1}{2}\int_\Omega h_{\alpha\beta\lambda \mu} w_{,\alpha\beta} w_{,\lambda \mu}\;dx
\nonumber \\ &&+\frac{1}{2}\int_\Omega N_{\alpha\beta} w_{,\alpha}w_{,\beta}\;dx>0, \end{eqnarray}
$\forall w \in W_0^{2,2}(\Omega)
\; \text{ such that } w \neq \mathbf{0}.$

Therefore,

\begin{eqnarray}&&-\langle w_\alpha, z^*_\alpha \rangle_{L^2}+F_5(w)+\sup_{v_2 \in L^2}\{\langle (v_2)_\alpha, z^*_\alpha \rangle_{L^2}-G_5(\{(v_2)_\alpha\})
\nonumber \\ &=& -\langle w_\alpha, z^*_\alpha \rangle_{L^2}+F_5(w)+\frac{1}{2} \int_\Omega \overline{(-N_{\alpha\beta})^K} z^*_\alpha z^*_\beta\;dx >0,\end{eqnarray}
$$\forall w \in W_0^{2,2}(\Omega)
\; \text{ such that } w \neq \mathbf{0},$$
so that

\begin{eqnarray}&&\inf_{ w \in W_0^{2,2}(\Omega)}\{-\langle w_\alpha, z^*_\alpha \rangle_{L^2}+F_5(w)\}+\frac{1}{2} \int_\Omega \overline{(-N_{\alpha\beta})^K} z^*_\alpha z^*_\beta\;dx \nonumber \\ &=&  -F^*_5(z^*)+\frac{1}{2} \int_\Omega \overline{(-N_{\alpha\beta})^K} z^*_\alpha z^*_\beta\;dx  \geq 0,\end{eqnarray}
$\forall z^* \in L^2.$

Indeed, from the general result in Toland \cite{12},
we have

\begin{eqnarray}&&\inf_{z^* \in L^2}\left\{-F^*_5(z^*)+\frac{1}{2} \int_\Omega \overline{(-N_{\alpha\beta})^K} z^*_\alpha z^*_\beta\;dx \right\}
\nonumber \\ &=& \inf_{ w \in W_0^{2,2}} \{F_5(w)-G_5(\{w_{,\alpha}\}) \nonumber \\ &\leq&  F_5(w)-G_5(\{w_{,\alpha}\})
\nonumber \\ &=& \frac{1}{2}\int_\Omega h_{\alpha\beta\lambda \mu} w_{,\alpha\beta} w_{,\lambda \mu}\;dx
\nonumber \\ &&+\frac{1}{2}\int_\Omega N_{\alpha\beta} w_{,\alpha}w_{,\beta}\;dx,\;\; \forall w \in W_0^{2,2}(\Omega). \end{eqnarray}
\end{remark}

At this point we enunciate and prove our main duality principle.

\begin{thm} Let $\Omega \subset \mathbb{R}^2$ be an open, bounded, connected set with a Lipschitzian boundary denoted by $\partial \Omega=\Gamma.$ Suppose $(G \circ \Lambda):U \rightarrow \mathbb{R}$ is defined by
$$G(\Lambda u)=G_1(\gamma (u))+G_2(\kappa (u)), \; \forall u \in U,$$
where
$$G_1(\gamma (u))= \frac{1}{2}\int_\Omega H_{\alpha\beta\lambda\mu} \gamma_{\alpha\beta}(u)\gamma_{\lambda \mu}(u)\;dx,$$
and
$$G_2(\kappa (u))=\frac{1}{2}\int_\Omega h_{\alpha\beta\lambda\mu}\kappa_{\alpha\beta}(u)\kappa_{\lambda \mu}(u)\;dx,$$
where
$$\Lambda(u)=(\gamma(u),\kappa(u))=(\{\gamma_{\alpha\beta}(u)\},\{\kappa_{\alpha\beta}(u)\}),$$
$$\gamma_{\alpha\beta}(u)=\frac{u_{\alpha,\beta}+u_{\beta,\alpha}}{2}+\frac{w_{,\alpha} w_{,\beta}}{2},$$
$$\kappa_{\alpha\beta}(u)=-w_{\alpha\beta}.$$
Here,
$$u=(u_1,u_2,w)=(u_\alpha,w) \in U=W_0^{1,2}(\Omega;\mathbb{R}^2) \times W_0^{2,2}(\Omega).$$

We also define,
\begin{eqnarray} F_1(u)&=& \langle w,P\rangle_{L^2}+\langle u_\alpha, P_\alpha \rangle_{L^2} \nonumber \\
&\equiv& \langle u,\mathbf{f}\rangle_{L^2}, \end{eqnarray}
where
$$\mathbf{f}=(P_\alpha,P) \in L^2(\Omega;\mathbb{R}^3).$$

Let $J:U \rightarrow \mathbb{R}$ be defined by
$$J(u)=G(\Lambda u)-F_1(u),\; \forall u \in U.$$

Under such hypotheses,
\begin{eqnarray}&& \inf_{u \in U} J(u) \nonumber \\ &\geq&
\sup_{v^* \in A^*}\{\inf_{z^* \in Y_2^*}\{-F^*(z^*,Q)+G^*(z^*,N)\}\},
\end{eqnarray}
where,
$$F(u)=G_2(\kappa(u))+\frac{K}{2} \langle w_{,\alpha},w_{,\alpha} \rangle_{L^2}, \forall u \in U.$$

Moreover, $F^*:[Y_2^*]^2 \rightarrow \mathbb{R}$ is defined by,
$$F^*(z^*,Q)=\sup_{u \in U}\{ \langle z^*_\alpha+Q_\alpha ,w_{,\alpha} \rangle_{L^2}-F(u)\},\; \forall z^* \in [Y_2^*]^2.$$

Also,

\begin{eqnarray} G(v)&=&-\frac{1}{2}\int_\Omega H_{\alpha\beta\lambda\mu}\left[(v_1)_{\alpha\beta}+\frac{(v_2)_\alpha (v_2)_\beta}{2}\right]
\left[(v_1)_{\lambda\mu}+\frac{(v_2)_\lambda (v_2)_\mu}{2}\right]\;dx \nonumber \\ &&+\frac{K}{2}\langle (v_2)_\alpha,(v_2)_\alpha \rangle_{L^2},
\end{eqnarray}
\begin{eqnarray}
&&G^*(z^*,N) \nonumber \\
&=& \sup_{v_2 \in Y_2}\{\inf_{v_1 \in Y_1}\{\;\langle N_{\alpha\beta}, (v_1)_{\alpha\beta} \rangle_{L^2}+\langle Q_\alpha+z^*_\alpha, (v_2)_\alpha \rangle_{L^2}-G(v)\}\}
\nonumber \\ &=& \frac{1}{2}\int_\Omega \overline{(-N_{\alpha\beta})^K}z^*_\alpha z^*_\beta\;dx \nonumber \\ &&-
\frac{1}{2}\int_\Omega \overline{H}_{\alpha\beta\lambda\mu}N_{\alpha\beta}N_{\lambda \mu}\;dx,\end{eqnarray}
if $v^*=(Q,N) \in A_3,$ where
$$A_3=\{v^* \in Y^*\;:\;\{(-N_{\alpha\beta})^K\}\text{ is positive definite in } \Omega\},$$

 \begin{equation} \{(-N_{\alpha\beta})^K\}= \left\{
\begin{array}{lr}
 -N_{11}+K  &  -N_{12}
 \\
 -N_{21} & - N_{22}+K
 \end{array} \right\},\end{equation}
and
 $$\overline{(-N_{\alpha\beta})^K}=\{(-N_{\alpha\beta})^K\}^{-1}.$$

Moreover,

$$A^*=A_1 \cap A_2\cap A_3 \cap A_4,$$
where $$A_1=\{v^*=(N,Q) \in Y^* \;:\; N_{\alpha\beta,\beta}+P_\alpha=0, \text{ in } \Omega,\; \forall \alpha \in \{1,2\}\}$$
and
$$A_2=\{v^* \in Y^*\;:\; Q_{\alpha,\alpha}+P=0, \text{ in } \Omega\}.$$
Also,
\begin{eqnarray}A_4&=&\{v^*=(Q,N) \in Y^*\;:\; \hat{J}^*(z^*)>0, \nonumber \\ &&\; \forall z^* \in Y_2^*, \; \text{ such that } z^* \neq \mathbf{0}\},
\end{eqnarray}
where, \begin{eqnarray}\hat{J}^*(z^*)&=& -F^*(z^*,\mathbf{0})+G^*(z^*,\mathbf{0}) \nonumber \\ &=&
-F^*(z^*,\mathbf{0})+\frac{1}{2}\int_S \overline{(-N_{\alpha\beta})^K}\; z^*_\alpha z^*_\beta\;dx,\; \forall z^* \in Y_2^*.\nonumber\end{eqnarray}

Here, $$Y^*=Y=L^2(\Omega; \mathbb{R}^2) \times L^2(\Omega;\mathbb{R}^{2 \times 2}),$$
$$Y_1^*=Y_1=L^2(\Omega;\mathbb{R}^{2\times 2}),$$ and $$Y_2^*=Y_2=L^2(\Omega;\mathbb{R}^2),$$

 Finally, denoting

 $$J^*(v^*,z^*)=-F^*(z^*,Q)+G^*(z^*,N), \forall (v^*,z^*) \in A^* \times Y_2^*,$$
 and
 $$\tilde{J}^*(v^*)=\inf_{z^* \in Y_2^*} J^*(v^*,z^*), \forall v^* \in A^*,$$
 suppose there exist $v_0^*=(N_0,Q_0) \in A^*$, $z_0^* \in Y_2^*$ and $u_0 \in U$ such that
 $$\delta \{J^*(v_0^*,z_0^*)-\langle (u_0)_\alpha, (N_0)_{\alpha\beta,\beta}+P_\alpha \rangle_{L^2}-\langle w_0, (Q_0)_{\alpha,\alpha}+P\rangle_{L^2}\}=0.$$

 Under such hypotheses,
 \begin{eqnarray} J(u_0)&=& \min_{u \in U} J(u) \nonumber \\ &=& \max_{v^* \in A^*} \tilde{J}^*(v^*)
 \nonumber \\ &=& \tilde{J}^*(v_0^*) \nonumber \\ &=& J^*(v_0^*,z_0^*). \end{eqnarray}

\end{thm}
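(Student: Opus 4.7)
The plan is to establish the weak-duality inequality first and then promote it to equality under the stated stationarity hypothesis. The central device is the Toland-type splitting already used in the preceding remark: write the primal functional as $J(u) = F(u) - \hat{G}(u) + [\text{affine terms in } u]$, where the quadratic regularizer $\tfrac{K}{2}\langle w_{,\alpha}, w_{,\alpha}\rangle_{L^2}$ is simultaneously added to $F$ and to the concave part so that both become convex; this allows passing to a saddle-point formulation via Fenchel conjugation. The dual variables have a clear mechanical meaning: $N_{\alpha\beta}$ is conjugate to the membrane strain, $Q_\alpha$ is conjugate to $w_{,\alpha}$, and $z^*_\alpha$ is the auxiliary Toland variable compensating the added regularizer.

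The first block of work is to compute the two Fenchel transforms. The transform $F^*(z^*,Q)$ is a standard Legendre transform on $W^{2,2}_0(\Omega)$; the linear load term $\langle w, P\rangle_{L^2}$ is absorbed via integration by parts as $-\langle w_{,\alpha}, Q_\alpha\rangle_{L^2}$ using $Q_{\alpha,\alpha} + P = 0$, which is precisely what places $v^*$ in $A_2$. The transform $G^*(z^*,N)$ is more delicate: minimizing first over $v_1$ yields the classical complementary energy term $-\tfrac{1}{2}\int_\Omega \overline{H}_{\alpha\beta\lambda\mu} N_{\alpha\beta} N_{\lambda\mu}\,dx$, and maximizing over $v_2$ yields $\tfrac{1}{2}\int_\Omega \overline{(-N_{\alpha\beta})^K}\, z^*_\alpha z^*_\beta\,dx$; this is exactly why $v^*$ must lie in $A_3$, so that $\{(-N_{\alpha\beta})^K\}$ is positive definite and invertible. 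Integration by parts of $\langle N_{\alpha\beta}, u_{\alpha,\beta}\rangle_{L^2}$ forces $N_{\alpha\beta,\beta} + P_\alpha = 0$, placing $v^*$ in $A_1$.

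With these transforms in hand, I would derive the weak inequality from the two Fenchel inequalities
\[
F(u) \geq \langle z^*_\alpha + Q_\alpha, w_{,\alpha}\rangle_{L^2} - F^*(z^*,Q),
\]
\[
G^*(z^*,N) \leq \langle N_{\alpha\beta}, (v_1)_{\alpha\beta}\rangle_{L^2} + \langle Q_\alpha + z^*_\alpha, (v_2)_\alpha\rangle_{L^2} - G(v),
\]
evaluated at $(v_1)_{\alpha\beta} = \gamma_{\alpha\beta}(u) - \tfrac{1}{2} w_{,\alpha}w_{,\beta}$ and $(v_2)_\alpha = w_{,\alpha}$. Combining these with the equilibrium constraints defining $A_1 \cap A_2$, one obtains $J(u) \geq -F^*(z^*,Q) + G^*(z^*,N)$, and then passing to $\inf_{z^*}$ and $\sup_{v^*\in A^*}$ gives the desired bound. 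The condition $v^* \in A_4$, interpreted through the Toland remark, guarantees that $\inf_{z^*} \hat{J}^*(v^*)$ is actually attained and does not collapse to $-\infty$; this is the most delicate step of the weak-duality part.

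For the equality statement, I would expand the stationarity condition on the augmented Lagrangian
\[
\mathcal{L}(u,v^*,z^*) = J^*(v^*,z^*) - \langle u_\alpha, N_{\alpha\beta,\beta} + P_\alpha\rangle_{L^2} - \langle w, Q_{\alpha,\alpha} + P\rangle_{L^2}.
\]
Variation in $N$, $Q$, and $z^*$ at $(v_0^*, z_0^*)$ recovers, after inversion of $H$ and of $\{(-N_{\alpha\beta})^K\}$, the primal constitutive and kinematic relations at $u_0$; variation in $u$ enforces the equilibrium equations that keep $v_0^*$ in $A_1 \cap A_2$. Substituting these identities into $\mathcal{L}$ yields $\mathcal{L}(u_0,v_0^*,z_0^*) = J(u_0) = J^*(v_0^*,z_0^*)$, and coupled with the previously established weak inequality this sandwiches both $\inf_{u} J(u)$ and $\sup_{v^*} \tilde{J}^*(v^*)$ between two equal values, giving the full chain of equalities. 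The main obstacle, and the reason the hypothesis must include $A_4$ in addition to $A_3$, is verifying that $z_0^*$ genuinely realizes the inner infimum defining $\tilde{J}^*(v_0^*)$ and that the positive-definiteness of $\{(-N_{\alpha\beta})^K\}$ persists along the stationarity relations so that both Fenchel transforms remain well defined at the candidate point.
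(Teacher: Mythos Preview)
Your overall strategy matches the paper's: a Toland-type splitting for the weak inequality, then stationarity of the augmented Lagrangian for the chain of equalities. The equality part of your sketch (varying in $Q$, $z^*$, $N$, recovering the constitutive relations, and substituting back to get $J^*(v_0^*,z_0^*)=J(u_0)$) is essentially what the paper does.

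There is, however, a genuine gap in your weak-duality derivation. You claim the pointwise bound $J(u)\geq -F^*(z^*,Q)+G^*(z^*,N)$ for \emph{every} $z^*\in Y_2^*$, obtained by combining two Fenchel-type inequalities. The first, $-F^*(z^*,Q)\leq -\langle z^*_\alpha+Q_\alpha,w_{,\alpha}\rangle_{L^2}+F(u)$, is correct. But the second,
\[
G^*(z^*,N)\;\leq\;\langle N_{\alpha\beta},(v_1)_{\alpha\beta}\rangle_{L^2}+\langle Q_\alpha+z^*_\alpha,(v_2)_\alpha\rangle_{L^2}-G(v),
\]
does not follow from the definition of $G^*$ as a $\sup_{v_2}\inf_{v_1}$: for an arbitrary choice of $(v_1,v_2)$ the outer sup gives $\geq$ while the inner inf gives $\leq$, and these do not compose into an inequality in either direction. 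In fact the pointwise-in-$z^*$ bound you want is generally false: under $A_4$ the function $z^*\mapsto J^*(v^*,z^*)$ has a strict interior minimum, hence is unbounded above as $\|z^*\|\to\infty$, so it cannot sit below the fixed number $J(u)$ for all $z^*$.

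The paper never attempts a pointwise-in-$z^*$ bound. It applies Toland's duality (from the preceding remark) directly to the \emph{infimum} in $z^*$, obtaining for every $w\in W_0^{2,2}(\Omega)$
\[
\inf_{z^*\in Y_2^*}\Bigl\{-F^*(z^*,Q)+\tfrac{1}{2}\int_\Omega\overline{(-N_{\alpha\beta})^K}\,z^*_\alpha z^*_\beta\,dx\Bigr\}
\;\leq\;F(u)-\langle Q_\alpha,w_{,\alpha}\rangle_{L^2}+\tfrac{1}{2}\langle N_{\alpha\beta}-K\delta_{\alpha\beta},\,w_{,\alpha}w_{,\beta}\rangle_{L^2}.
\]
Only \emph{after} this infimum has been taken does one add back $-\tfrac{1}{2}\int_\Omega\overline{H}_{\alpha\beta\lambda\mu}N_{\alpha\beta}N_{\lambda\mu}\,dx$, use the constraints in $A_1\cap A_2$ (integration by parts against $u_\alpha$ and $w$), and bound the result by $J(u)$ via the Legendre transform of $G_1$. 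Your argument is repaired by inserting this Toland step in place of the incorrect second Fenchel inequality; everything else you wrote then goes through as in the paper.
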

\begin{proof}

Observe that, from the general result in Toland \cite{12}, we have

\begin{eqnarray}
\inf_{z^* \in Y_2^*} J^*(v^*,z^*)&=& \inf_{z^* \in Y_2^*}\{ -F^*(z^*,Q)+G^*(z^*,N)\} \nonumber \\
 &=& \inf_{z^* \in Y_2^*}\left\{ -F^*(z^*,Q)+\frac{1}{2}\int_\Omega \overline{(-N_{\alpha\beta})^K}\;z^*_\alpha \;z^*_\beta\;dx \right.\nonumber \\ && \left.-
\frac{1}{2}\int_\Omega \overline{H}_{\alpha\beta\lambda\mu}N_{\alpha\beta}N_{\lambda \mu}\;dx\right\}
\nonumber \\  &\leq& -\langle Q_\alpha+z^*_\alpha \;,\;w_{,\alpha} \rangle_{L^2} +F(u)+\langle z^*_\alpha\;,\;w_{,\alpha} \rangle_{L^2}+\frac{1}{2}\langle N_{\alpha\beta} -K \delta_{\alpha\beta}, w_{,\alpha} w_{,\beta}
 \rangle_{L^2}\nonumber \\ && -
\frac{1}{2}\int_\Omega \overline{H}_{\alpha\beta\lambda\mu}N_{\alpha\beta}N_{\lambda \mu}\;dx\nonumber \\ &=& -\langle Q_\alpha , w_{,\alpha} \rangle_{L^2} +F(u)-\frac{1}{2}\langle N_{\alpha\beta} -K \delta_{\alpha\beta}, w_{,\alpha} w_{,\beta}
 \rangle_{L^2}\nonumber \\ &&-
\frac{1}{2}\int_\Omega \overline{H}_{\alpha\beta\lambda\mu}N_{\alpha\beta}N_{\lambda \mu}\;dx.
\end{eqnarray}

From this,
\begin{eqnarray}
\inf_{z^* \in Y_2^*} J^*(v^*,z^*) &\leq& -\langle P , w \rangle_{L^2} +G_2(\kappa(u))+ \sup_{N \in Y_1^*}\left\{\frac{1}{2}\langle N_{\alpha\beta}, w_{,\alpha} w_{,\beta}
 \rangle_{L^2}\right.\nonumber \\ &&-
\left.\frac{1}{2}\int_\Omega \overline{H}_{\alpha\beta\lambda\mu}N_{\alpha\beta}N_{\lambda \mu}\;dx -\langle u_\alpha, N_{\alpha\beta,\beta}+P_\alpha
\rangle_{L^2}\right\}\nonumber \\ &=&-\langle P , w \rangle_{L^2} +G_2(\kappa(u))+ \sup_{N \in Y_1^*}\left\{\frac{1}{2}\langle N_{\alpha\beta}, u_{\alpha,\beta}+u_{\beta,\alpha}+ w_{,\alpha} w_{,\beta}
 \rangle_{L^2}\right.\nonumber \\ &&-
\left.\frac{1}{2}\int_\Omega \overline{H}_{\alpha\beta\lambda\mu}N_{\alpha\beta}N_{\lambda \mu}\;dx -\langle u_\alpha, P_\alpha
\rangle_{L^2}\right\}\nonumber \\ &=& G_2(\kappa(u))+G_1(\gamma(u))-\langle  w ,P\rangle_{L^2}-\langle u_\alpha, P_\alpha \rangle_{L^2}
\nonumber \\ &=& J(u),\;\; \forall u \in U,\; v^*=(Q,N) \in A^*.
\end{eqnarray}

Thus,
\begin{eqnarray}
J(u) &\geq& \inf_{z^* \in Y_2^*}\{- F^*(z^*,Q)+G^*(z^*,N)\} \nonumber \\ &=& \inf_{z^* \in Y_2^*} J^*(v^*,z^*)
\nonumber \\ &=& \tilde{J}^*(v^*), \end{eqnarray}
$\forall v^* \in A^*,\; u\in U.$

Summarizing,
$$J(u) \geq \tilde{J}^*(v^*),\; \forall u \in U,\; v^* \in A^*,$$
so that,

\begin{equation}\label{usp.55a}
\inf_{u \in U} J(u) \geq \sup_{v^* \in A^*} \tilde{J}^*(v^*).
\end{equation}
Finally, assume $v_0^*=(N_0,Q_0) \in A^*$, $z_0^* \in Y_2^*$ and $u_0 \in U$ are such that
$$\delta\{J^*(v_0^*,z_0^*)-\langle (u_0)_\alpha, (N_0)_{\alpha\beta,\beta}+P_\alpha \rangle_{L^2}-\langle w_0,(Q_0)_{\alpha,\alpha}+P\rangle_{L^2}\}=0.$$

From the variation in $Q^*$ we get
$$\frac{\partial F^*(z_0^*,Q_0)}{\partial Q_\alpha}
=(w_0)_{,\alpha},
$$
so that from this and the variation in $z^*$, we get
\begin{eqnarray}\label{ps.88a}\frac{\partial G^*(z_0^*,N_0)}{\partial z^*_\alpha} &=&
\frac{\partial F^*(z_0^*,Q_0)}{\partial z^*_\alpha}
\nonumber \\ &=& \frac{\partial F^*(z_0^*,Q_0)}{\partial Q_\alpha} \nonumber \\ &=& (w_0)_{,\alpha} \nonumber \\
&=& \overline{N_{\alpha\beta}^K}(z_0^*)_{\beta}, \text{ in } \Omega.\end{eqnarray}
Hence,
$$F^*(z_0^*,Q_0)  =\langle (z_0^*)_\alpha+(Q_0)_\alpha,(w_0)_{,\alpha} \rangle_{L^2}-F(u_0).$$

Also, from (\ref{ps.88a}) we have,
\begin{equation}\label{ps2a}(z_0^*)_\alpha=(N_0)_{\alpha\beta}(w_0){,_\beta}+K (w_0)_{,\alpha}.\end{equation}

From such results and the variation in $N$ we obtain
$$\frac{(u_0)_{\alpha,\beta}+(u_0)_{\beta,\alpha}}{2}+\frac{(w_0)_{,\alpha} (w_0)_{,\beta}}{2}-\overline{H}_{\alpha\beta\gamma\mu}(N_0)_{\lambda\mu}=0,$$
so that
$$(N_0)_{\alpha\beta}=H_{\alpha\beta\lambda\mu}\gamma_{\lambda\mu}(u_0).$$

From these last results we have,
\begin{eqnarray}
G^*(z_0^*,N_0)&=&\langle (z_0^*)_\alpha, (w_0)_\alpha \rangle_{L^2}+\frac{1}{2}\left\langle (N_0)_{\alpha\beta}-K\delta_{\alpha\beta},(w_0)_\alpha (w_0)_\beta \right\rangle_{L^2} \nonumber \\ &&-\frac{1}{2}\int_\Omega \overline{H_{\alpha\beta\lambda \mu}}(N_0)_{\alpha\beta}(N_0)_{\lambda \mu}\;dx
\nonumber \\ &=& \langle (z_0^*)_\alpha, (w_0)_\alpha \rangle_{L^2}+\frac{1}{2}\left\langle (N_0)_{\alpha\beta},\frac{(u_0)_{\alpha,\beta}+(u_0)_{\beta,\alpha}}{2} +(w_0)_\alpha (w_0)_\beta \right\rangle_{L^2} \nonumber \\ &&-\langle P_\alpha, (u_0)_\alpha\rangle_{L^2}-\frac{1}{2}\int_\Omega \overline{H_{\alpha\beta\lambda \mu}}(N_0)_{\alpha\beta}(N_0)_{\lambda \mu}\;dx
\nonumber \\ &&-\frac{K}{2}
\langle (w_0)_{,\alpha},(w_0)_{,\alpha} \rangle_{L^2} \nonumber \\ &=& \langle (z_0^*)_\alpha, (w_0)_\alpha \rangle_{L^2}+ G_1(\gamma(u_0))-\frac{K}{2}
\langle (w_0)_{,\alpha},(w_0)_{,\alpha} \rangle_{L^2}\nonumber \\ &&-\langle P_\alpha, (u_0)_\alpha\rangle_{L^2} .
\end{eqnarray}

Joining the pieces, we obtain
\begin{eqnarray}\label{usp55a}J^*(v_0,z_0^*)&=& -F^*(z_0^*,Q_0)+G^*(z_0^*,N_0)
\nonumber \\ &=& F(u_0)+G_1(\gamma(u_0))+\frac{K}{2}\langle (w_0)_\alpha, (w_0)_\alpha \rangle_{L^2} \nonumber \\ && -\langle P, w_0 \rangle_{L^2}-
 \langle P_\alpha, (u_0)_\alpha\rangle_{L^2}\nonumber \\ &=& G_1(\gamma(u_0))+G_2(\kappa(u_0)) \nonumber \\ && -\langle P, w_0 \rangle_{L^2}-
 \langle P_\alpha, (u_0)_\alpha\rangle_{L^2}  \nonumber \\ &=&J(u_0).\end{eqnarray}

From  this and from $v_0^* \in A_4$, we have

$$J(u_0)=J^*(v_0^*,z^*_0)=\inf_{z^* \in Y_2^*} J^*(v^*_0,z^*)=\tilde{J^*}(v_0^*).$$

Therefore, from such a last equality  and (\ref{usp.55a}), we may infer that
\begin{eqnarray} J(u_0)&=& \min_{u \in U} J(u) \nonumber \\ &=& \max_{v^* \in A^*} \tilde{J}^*(v^*)
 \nonumber \\ &=& \tilde{J}^*(v_0^*) \nonumber \\ &=& J^*(v_0^*,z_0^*). \end{eqnarray}

 The proof is complete.
 \end{proof}

\section{Existence and duality for a non-linear shell model} In this section we present the primal variational formulation concerning
the shell model presented in \cite{2900}. Indeed, in some sense, the duality principle here presented extends the
results developed in \cite{2900}. In fact, through a  generalization of some ideas developed in \cite{12}, we have obtained a
duality principle for which the membrane force tensor, concerning the global optimal solution of the primal formulation, may not be necessarily either positive or negative definite. We emphasize details on the Sobolev spaces involved may be found in
\cite{1}.  About the fundamental concepts of convex analysis and duality here used, we would cite \cite{29,[6],120}.
Similar problems are addressed in \cite{120,10,11,9}.

At this point we start to describe the shell model in question.
    Let $D\subset\mathbb{R}^2$ be an open, bounded, connected set  with a $C^3$ class boundary.

    Let $S \subset \mathbb{R}^3$ be a $C^3$ class manifold, where
    $$S=\{\mathbf{r}(\xi)\;:\; \xi=(\xi_1,\xi_2) \in \overline{D}\},$$
$\mathbf{r}:\overline{D} \subset \mathbb{R}^2 \rightarrow \mathbb{R}^3$ is a  $C^3$ class function and where,
$$\mathbf{r}(\xi)=X_1(\xi)\mathbf{e}_1+X_2(\xi) \mathbf{e}_2+X_3(\xi)\mathbf{e}_3.$$

Here, $\{\mathbf{e}_1,\mathbf{e}_2,\mathbf{e}_3\}$ is the canonical basis of $\mathbb{R}^3$.

We assume $S$ is the middle surface of a shell of constant thickness $h$ so that we denote,

$$\mathbf{a}_\alpha=\frac{\partial \mathbf{r}}{\partial \mathbf{\xi_\alpha}},\; \forall \alpha \in \{1,2\},$$
$$a_{\alpha\beta}=\mathbf{a}_\alpha \cdot \mathbf{a}_\beta.$$

Let $$\mathbf{n}=\frac{\mathbf{a}_\alpha \times \mathbf{a}_\beta}{|\mathbf{a}_\alpha \times \mathbf{a}_\beta|},$$
be the unit normal to $S$, so that we define the covariant components $b_{\alpha\beta}$ of the
curvature tensor $$b=\{b_{\alpha\beta}\},$$ by
$$b_{\alpha\beta}=\mathbf{n}\cdot \mathbf{a}_{\alpha, \beta}=\mathbf{n}\cdot \mathbf{r}_{,\alpha\beta}.$$

Observe that $$\mathbf{n}\cdot \mathbf{a}_\alpha=0,$$ so that
$$\mathbf{n}_\beta \cdot \mathbf{a}_\alpha+ \mathbf{n}\cdot \mathbf{a}_{\alpha, \beta}=0,$$
and thus we obtain,
$$b_{\alpha\beta}=\mathbf{n}\cdot \mathbf{a}_{\alpha, \beta}=-\mathbf{n}_\beta \cdot \mathbf{a}_\alpha.$$

The Christofell symbols relating $S$, would be,
$$\Gamma_{\alpha\beta\gamma}=\frac{1}{2}(a_{\alpha\beta, \gamma}+a_{\alpha\gamma,\beta}-a_{\beta\gamma,\alpha}),\; \forall \alpha,\beta,\gamma \in \{1,2\}$$
and
$$\Gamma_{\alpha\beta}^\lambda=a^{\lambda \gamma}\Gamma_{ \gamma\alpha\beta},\; \forall \alpha,\beta,\lambda \in \{1,2\},$$
where
$$\{a^{\alpha\beta}\}=\{a_{\alpha\beta}\}^{-1}.$$

Let us denote with a bar the quantities relating the deformed middle surface.

So, the middle surface strain tensor $\gamma=\{\gamma_{\alpha\beta}\}$ is given by
$$\gamma_{\alpha\beta}=\frac{(\bar{a}_{\alpha\beta}-a_{\alpha\beta})}{2},$$
while the tensor relating change in curvature is given by
$$\kappa_{\alpha\beta}=-(\bar{b}_{\alpha\beta}-b_{\alpha\beta}), \; \forall \alpha,\beta \in \{1,2\}.$$

We also denote,

\begin{eqnarray} U&=&\{u=(u_\alpha,w)=(u_1,u_2,w) \in W^{1,2}(S; \mathbb{R}^2) \times W^{2,2}(S)\;:
\nonumber \\ && u_\alpha=w= \frac{\partial w}{\partial \mathbf{n}}=0, \text{ on } \partial S\} \nonumber \\ &=&
W_0^{1,2}(S;\mathbb{R}^2) \times W_0^{2,2}(S),
\end{eqnarray}
where, in order to simplify the analysis, the boundary conditions in question refer to a clamped shell and where
$\partial S$ denotes the boundary of $S$.

Also from reference \cite{2900}, for moderately large rotations around tangents, the strain displacements
relations are given by,

\begin{enumerate}
\item $$\gamma_{\alpha\beta}(u)=\theta_{\alpha\beta}(u)+\frac{1}{2}\varphi_\alpha(u) \varphi_\beta(u),$$
\item \begin{eqnarray}\kappa_{\alpha\beta}(u)&=&-w_{|\alpha\beta}-b_{\alpha | \beta}^\lambda u_\lambda
 \nonumber \\ &&-b_{\alpha}^\lambda u_{\lambda | \beta}-b_{\beta}^\lambda u_{\lambda | \alpha}+b_\alpha^\lambda b_{\lambda \beta} w, \end{eqnarray}
\end{enumerate}
where,

$$u_{\alpha| \beta}=u_{\alpha,\beta}-\Gamma_{\alpha\beta}^\lambda u_\lambda,$$
$$w_{|\alpha\beta}=w_{,\alpha\beta}-\Gamma_{\alpha\beta}^\lambda w_{,\lambda},$$
$$\theta_{\alpha\beta}(u)=\frac{1}{2}(u_{\alpha|\beta}+u_{\beta|\alpha})-b_{\alpha\beta}w,$$
\begin{equation}\label{b.1}\varphi_\alpha(u)=w_{,\alpha}+b_\alpha^\beta u_\beta,\end{equation}
and
$$b_\alpha^\beta=b_{\alpha \lambda} a^{\lambda \beta}.$$

The primal shell inner energy is defined by
\begin{eqnarray}W(\gamma(u),\kappa(u))&=&\frac{1}{2}\int_SH^{\alpha\beta\lambda\mu}\gamma_{\alpha\beta}(u)\gamma_{\lambda \mu}(u)\;dS \nonumber \\ &&+\frac{1}{2}\int_Sh^{\alpha\beta\lambda\mu}\kappa_{\alpha\beta}(u)\kappa_{\lambda \mu}(u)\;dS,
\end{eqnarray}
where
$$H^{\alpha\beta\lambda\mu}=\frac{Eh}{2(1+\nu)}\left(a^{\alpha\lambda}a^{\beta \mu}+a^{\alpha\mu}a^{\beta\lambda}+\frac{2\nu}{1-\nu}a^{\alpha\beta}a^{\lambda \mu}\right),$$

$$h^{\alpha\beta\lambda\mu}=\frac{h^2}{12}H^{\alpha\beta\lambda\mu},$$

and $E$ denotes the Young's modulus and $\nu$ the Poisson ratio.

The constitutive relations are,
$$N^{\alpha\beta}=H^{\alpha\beta\lambda\mu}\gamma_{\lambda\mu}(u),$$
and
$$M^{\alpha\beta}=h^{\alpha\beta\lambda\mu}\kappa_{\lambda\mu}(u),$$

where $\{N^{\alpha\beta}\}$ is the membrane force tensor and $\{M^{\alpha\beta}\}$ is the moment one.

We assume $$H=\{H^{\alpha\beta\lambda\mu}\}$$ to be positive definite in the sense that there exists $c_0>0$
such that
$$H^{\alpha\beta\lambda\mu}t_{\alpha\beta}t_{\lambda\mu} \geq c_0 t_{\alpha\beta}t_{\alpha\beta} \geq 0, \; \forall t=\{t_{\alpha\beta}\} \in \mathbb{R}^4.$$

Finally, the primal variational formulation for this model  will be given by
$$J:U \rightarrow  \mathbb{R}$$ where,
$$J(u)=W(\gamma(u),\kappa(u))-\langle u,\mathbf{f} \rangle_{L^2},$$
$$W(\gamma(u),\kappa(u))=G_1(\gamma(u))+G_2(\kappa(u)),$$
$$G_1(\gamma(u))=\frac{1}{2}\int_SH^{\alpha\beta\lambda\mu}\gamma_{\alpha\beta}(u)\gamma_{\lambda \mu}(u)\;dS,$$
$$G_2(\kappa(u))=\frac{1}{2}\int_Sh^{\alpha\beta\lambda\mu}\kappa_{\alpha\beta}(u)\kappa_{\lambda \mu}(u)\;dS,$$
and $$\langle u,\mathbf{f} \rangle_{L^2}=\int_S(P^\alpha u_\alpha+Pw)\;dS.$$

Here $$\mathbf{f}=(P^\alpha,P) \in L^2(S;\mathbb{R}^3),$$
are the external loads distributed on $S$, $P^\alpha$ relating the directions $\mathbf{a}_\alpha$ and $P$ relating the direction $\mathbf{n}$, respectively.

Moreover,  generically for $f_1,f_2 \in L^2(S),$ we denote,
$$ \langle f_1, f_2 \rangle_{L^2}=\int_S f_1 f_2\;dS,$$
where $dS= \sqrt{a} \;d\xi_1\;d\xi_2,$ and $a=det\{a_{\alpha \beta}\}.$
\section{Existence of a minimizer}
\begin{thm}\label{us123b}Consider the statements and assumptions concerning the shell model described in the last section.

More specifically, consider the functional $J:U \rightarrow \mathbb{R}$ given, as above described, by

$$J(u)=W(\gamma(u),\kappa(u))-\langle u,\mathbf{f} \rangle_{L^2},$$
where,
 \begin{eqnarray} U&=&\{u=(u_\alpha,w)=(u_1,u_2,w) \in W^{1,2}(S; \mathbb{R}^2) \times W^{2,2}(S)\;:
\nonumber \\ && u_\alpha=w= \frac{\partial w}{\partial \mathbf{n}}=0, \text{ on } \partial S\} \nonumber \\
&=& W_0^{1,2}(S;\mathbb{R}^2) \times W^{2,2}_0(S).
\end{eqnarray}
Let $T_0$ be such that,
$$\|T_0\|_2^2=\min_{ T \in L^2(S;\mathbb{R}^{2 \times 2})}\{\|T\|_2^2\},$$
subject to $$\frac{(\sqrt{g} T_{\alpha\beta})_{,\beta}}{\sqrt{g}}+\Gamma^\alpha_{\lambda\beta}T_{\lambda\beta}+P_\alpha=0 \text{ in }D.$$

Assume \begin{equation}\label{e.2}J_1(u) \rightarrow +\infty, \text{ as } \langle w_{,\alpha\beta},w_{,\alpha\beta} \rangle_{L^2} +\langle u_{\alpha,\beta}
,u_{\alpha,\beta} \rangle_{L^2} \rightarrow +\infty,\end{equation}
where
$$J_1(u)=G_2(\kappa(u))+\frac{1}{2}\langle (T_0)_{\alpha\beta}, \varphi_\alpha(u) \varphi_\beta(u) \rangle -\langle (T_0)_{\alpha \beta}
b_{\alpha\beta}+P,w \rangle_{L^2}.$$

Under such hypotheses, there exists $u_0 \in U$ such that,
$$J(u_0)=\min_{u \in U} \{J(u)\}.$$
\end{thm}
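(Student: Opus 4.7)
The plan is to repeat the direct-method scheme used for Theorem~\ref{us123a}, but with the intrinsic covariant calculus of the shell taking the place of ordinary divergence and partial derivatives. First I would exploit the equilibrium-type constraint satisfied by $T_0$,
\begin{equation}
\frac{(\sqrt{g}(T_0)_{\alpha\beta})_{,\beta}}{\sqrt{g}}+\Gamma^\alpha_{\lambda\beta}(T_0)_{\lambda\beta}+P_\alpha=0 \;\text{ in } D,
\end{equation}
together with the clamped boundary data $u_\alpha=0$ on $\partial S$, to integrate by parts and rewrite $\langle P^\alpha,u_\alpha\rangle_{L^2}$ as $-\langle(T_0)_{\alpha\beta},\tfrac{1}{2}(u_{\alpha|\beta}+u_{\beta|\alpha})\rangle_{L^2}$. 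Inserting $\theta_{\alpha\beta}(u)=\tfrac{1}{2}(u_{\alpha|\beta}+u_{\beta|\alpha})-b_{\alpha\beta}w$ and $\gamma_{\alpha\beta}(u)=\theta_{\alpha\beta}(u)+\tfrac{1}{2}\varphi_\alpha(u)\varphi_\beta(u)$, a direct rearrangement would cast $J$ in the decomposition
\begin{equation}
J(u)=J_1(u)+G_1(\gamma(u))-\langle (T_0)_{\alpha\beta},\gamma_{\alpha\beta}(u)\rangle_{L^2},
\end{equation}
in exact analogy with the final line of (\ref{usp90}).

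The quadratic remainder $G_1(\gamma(u))-\langle T_0,\gamma(u)\rangle_{L^2}$ is bounded below by completing the square using positive definiteness of $H$, so combined with the coercivity hypothesis (\ref{e.2}) the functional $J$ is bounded below, and any minimizing sequence $\{u_n\}$ must satisfy $\|(w_n)_{,\alpha\beta}\|_2$ and $\|(u_n)_{\alpha,\beta}\|_2$ bounded. Korn's inequality on $S$ (valid for a $C^3$ surface with the clamped portion of positive measure) together with Poincar\'e's inequality then yield a uniform bound $\|u_n\|_{W^{1,2}(S;\mathbb{R}^2)}+\|w_n\|_{W^{2,2}(S)}\leq K$. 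Passing to a subsequence, I would extract a limit $u_0=((u_0)_\alpha,w_0)\in U$ with $(u_n)_{\alpha,\beta}\rightharpoonup(u_0)_{\alpha,\beta}$ and $(w_n)_{,\alpha\beta}\rightharpoonup(w_0)_{,\alpha\beta}$ weakly in $L^2$, while Rellich--Kondrachov gives strong $L^2$ convergence of $(u_n)_\alpha$ and $w_n$ and strong $L^2\cap L^4$ convergence of $(w_n)_{,\alpha}$; the clamped boundary conditions persist in trace sense.

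Given these convergences, every term appearing in $\gamma_{\alpha\beta}(u_n)$ and $\kappa_{\alpha\beta}(u_n)$ splits into one factor that converges weakly in $L^2$ plus lower-order quantities (including the nonlinear product $\varphi_\alpha(u_n)\varphi_\beta(u_n)$) that converge strongly in $L^2$, so $\gamma(u_n)\rightharpoonup\gamma(u_0)$ and $\kappa(u_n)\rightharpoonup\kappa(u_0)$ weakly in $L^2$. Since $H$ and $h$ are positive definite, $G_1$ and $G_2$ are continuous convex quadratic forms on $L^2$, hence weakly lower semicontinuous, and $\langle u,\mathbf{f}\rangle_{L^2}$ is continuous under the strong convergences of $(u_n)_\alpha$ and $w_n$. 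Therefore $J(u_0)\leq\liminf_n J(u_n)=\inf_{u\in U}J(u)$, which gives the minimizer. The step I expect to need the most care is justifying strong $L^2$ convergence of the product $\varphi_\alpha(u_n)\varphi_\beta(u_n)=((w_n)_{,\alpha}+b_\alpha^{\lambda}(u_n)_{\lambda})((w_n)_{,\beta}+b_\beta^{\mu}(u_n)_{\mu})$, which relies on uniform boundedness of $b_\alpha^\beta$ and $\Gamma^\lambda_{\alpha\beta}$ on $\overline{D}$ (ensured by $\mathbf{r}\in C^3$) and on the Sobolev embeddings $W^{2,2}(S)\hookrightarrow W^{1,4}(S)$ and $W^{1,2}(S)\hookrightarrow L^4(S)$ on the two-dimensional base, the latter being the shell analogue of (\ref{a.2}).
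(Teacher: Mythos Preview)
Your proposal is correct and follows essentially the same route as the paper: the key decomposition $J(u)=J_1(u)+G_1(\gamma(u))-\langle T_0,\gamma(u)\rangle_{L^2}$ via the covariant integration by parts, boundedness below from completing the square plus hypothesis~(\ref{e.2}), extraction of weakly convergent subsequences, and weak lower semicontinuity of the convex quadratic forms $G_1,G_2$ in $\gamma,\kappa$. One small remark: since hypothesis~(\ref{e.2}) already controls the full quantity $\langle u_{\alpha,\beta},u_{\alpha,\beta}\rangle_{L^2}$ (not merely the symmetrized strain), the paper obtains the $W^{1,2}$ bound on $(u_n)_\alpha$ directly from Poincar\'e without invoking Korn's inequality, so that step in your outline is redundant but harmless.
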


\begin{proof}

Observe that
\begin{eqnarray}\langle P_\alpha,u_\alpha \rangle_{L^2}&=&-\int_D\left(\frac{(\sqrt{g} (T_0)_{\alpha\beta})_{,\beta}}{\sqrt{g}}+\Gamma^\alpha_{\lambda\beta}(T_0)_{\lambda\beta}\right)u_\alpha \sqrt{g}\;d\xi
\nonumber \\ &=& \left\langle (T_0)_{\alpha\beta}, \frac{u_{\alpha,\beta}+u_{\beta,\alpha}}{2}-\Gamma^\lambda_{\alpha\beta}u_\lambda \right\rangle_{L^2}
\nonumber \\ &=& \langle (T_0)_{\alpha\beta},\theta_{\alpha\beta}(u)+b_{\alpha\beta}w \rangle_{L^2}, \end{eqnarray}

so that defining \begin{equation}\label{b.2}v_{\alpha\beta}(u)=\theta_{\alpha\beta}(u)+\frac{1}{2}\varphi_\alpha(u)\varphi_\beta(u),\end{equation}
we obtain
$$\langle P_\alpha,u_\alpha \rangle_{L^2}=\left\langle (T_0)_{\alpha\beta},v_{\alpha\beta}(u)-\frac{1}{2}\varphi_\alpha(u)\varphi_\beta(u)+b_{\alpha\beta}w \right\rangle_{L^2}.$$

Thus,
\begin{eqnarray}\label{usp9}
J(u)&=& G_1(v(u))-\langle (T_0)_{\alpha\beta},v_{\alpha\beta}(u) \rangle_{L^2}
\nonumber \\ &&+G_2(\kappa (u))+\frac{1}{2}\langle (T_0)_{\alpha\beta}, \varphi_\alpha(u) \varphi_\alpha(u) \rangle_{L^2}
 \nonumber \\ &&-\langle (T_0)_{\alpha \beta}
b_{\alpha\beta}+P,w \rangle_{L^2},
\end{eqnarray}

From this and hypothesis (\ref{e.2}), $J$ is bounded below. So, there exists $\alpha_1 \in \mathbb{R}$ such that
$$\alpha_1=\inf_{u \in U} J(u).$$

Let $\{u_n\}$ be a minimizing sequence for $J$.

From (\ref{usp9}) and also from the hypotheses (\ref{e.2}), $\{\|w_n\|_{2,2}\}$ and $\{\|(u_n)_\alpha\|_{1,2}\}$ are bounded.

So there exists $w_0 \in W^{2,2}_0(S)$ and $\{(u_0)_\alpha\} \in W^{1,2}_0(S;\mathbb{R}^2)$ such that, up to a not relabeled subsequence,
$$(w_n)_{,\alpha\beta} \rightharpoonup (w_0)_{,\alpha\beta}
\text{ weakly in } L^2(S),\; \forall \alpha,\beta \in \{1,2\},$$
\begin{equation}\label{b.3}(w_n)_{,\alpha} \rightarrow (w_0)_{,\alpha}
\text{ strongly in } L^2(S),\; \forall \alpha\in \{1,2\},\end{equation}

\begin{equation}\label{b.5}(u_n)_{\alpha,\beta} \rightharpoonup (u_0)_{\alpha,\beta}
\text{ weakly in } L^2(S),\; \forall \alpha,\beta \in \{1,2\},\end{equation}
\begin{equation}\label{b.4}(u_n)_{\alpha} \rightarrow (u_0)_{\alpha}
\text{ strongly in } L^2(S),\; \forall \alpha\in \{1,2\}.\end{equation}

Also from (\ref{usp9}), there exists $K_2>0$ such that,
$$\|(v_n)_{\alpha\beta}(u)\|_2< K_2,\forall \alpha,\beta \in \{1,2\},\;n \in \mathbb{N},$$
and thus, from this, (\ref{b.1}), (\ref{b.2}), (\ref{b.3}) and (\ref{b.4}), we may infer that
there exists $K_3>0$ such that
$$\|\theta_{\alpha \beta}(u_n)\|_2< K_3,\forall \alpha,\beta \in \{1,2\},\;n \in \mathbb{N}.$$

 Thus, from this, (\ref{b.4}) and (\ref{b.5}), up to a subsequence not relabeled,
 $$\theta_{\alpha \beta}(u_n) \rightharpoonup \theta_{\alpha\beta}(u_0),\; \text{ weakly in } L^2,$$
 $\forall \alpha,\beta \in \{1,2\}, \text{ as } n \rightarrow \infty.$

From this, also up to a subsequence not relabeled, we have
$$\gamma_{\alpha\beta}(u_n) \rightharpoonup \gamma_{\alpha\beta}(u_0), \text{ weakly in } L^2,$$
$\forall \alpha,\beta \in \{1,2\},$
and
$$\kappa_{\alpha\beta}(u_n) \rightharpoonup \kappa_{\alpha\beta}(u_0), \text{ weakly in } L^2,$$
$\forall \alpha,\beta \in \{1,2\}.$

Therefore, from the convexity of $G_1$ in $\gamma$ and $G_2$ in $\kappa$ we obtain
\begin{eqnarray}
\inf_{u \in U}J(u)&=& \alpha_1 \nonumber \\ &=& \liminf_{n \rightarrow \infty}J(u_n) \nonumber \\ &\geq&
J(u_0).
\end{eqnarray}

Thus, $$J(u_0)=\min_{u\in U}J(u).$$

The proof is complete.

\end{proof}

\section{The duality principle} At this point we present the main duality principle, which is summarized by the next
theorem.

\begin{thm} Consider the statements and assumptions concerning the shell model described in the last two sections.

More specifically, consider the functional $J:U \rightarrow \mathbb{R}$ given, as above described by,

$$J(u)=W(\gamma(u),\kappa(u))-\langle u,\mathbf{f} \rangle_{L^2},$$
where,
 \begin{eqnarray} U&=&\{u=(u_\alpha,w)=(u_1,u_2,w) \in W^{1,2}(S; \mathbb{R}^2) \times W^{2,2}(S)\;:
\nonumber \\ && u_\alpha=w= \frac{\partial w}{\partial \mathbf{n}}=0, \text{ on } \partial S\} \nonumber \\
&=& W_0^{1,2}(S;\mathbb{R}^2) \times W^{2,2}_0(S).
\end{eqnarray}
Under such hypotheses,
\begin{eqnarray}&& \inf_{u \in U} J(u) \nonumber \\ &\geq&
\sup_{v^* \in A^*}\{\inf_{z^* \in Y_2^*}\{-F^*(z^*,Q)+G^*(z^*,N)\}\},
\end{eqnarray}
where,
$$F(u)=G_2(\kappa(u))+\frac{K}{2} \langle \varphi_{\alpha}(u),\varphi_{\alpha}(u) \rangle_{L^2}, \forall u \in U.$$

Moreover, $F^*:[Y_2^*]^2 \rightarrow \mathbb{R}$ is defined by,
$$F^*(z^*,Q)=\sup_{u \in U}\{ \langle z^*_{\alpha}+Q_\alpha,\varphi_{\alpha}(u) \rangle_{L^2}-F(u)\},\; \forall (z^*,Q) \in [Y_2^*]^2.$$

Also,

\begin{eqnarray} G(v)&=&-\frac{1}{2}\int_S H_{\alpha\beta\lambda\mu}\left[(v_1)_{\alpha\beta}+\frac{(v_2)_\alpha (v_2)_\beta}{2}\right]
\left[(v_1)_{\lambda\mu}+\frac{(v_2)_\lambda (v_2)_\mu}{2}\right]\;dS \nonumber \\ &&+\frac{K}{2}\langle (v_2)_\alpha,(v_2)_\alpha \rangle_{L^2},
\end{eqnarray}
\begin{eqnarray}
G^*(z^*,N)&=&
\sup_{v_2 \in Y_2}\{\inf_{v_1 \in Y_1} \{\langle N^{\alpha\beta}, (v_1)_{\alpha\beta} \rangle_{L^2}+\langle z^*_\alpha , (v_2)_\alpha \rangle_{L^2}-G(v)\}\}
\nonumber \\ &=& \frac{1}{2}\int_S \overline{(-N^{\alpha\beta})^K}z^*_\alpha z^*_\beta\;dS \nonumber \\ &&-
\frac{1}{2}\int_S \overline{H}_{\alpha\beta\lambda\mu}N^{\alpha\beta}N^{\lambda \mu}\;dS,\end{eqnarray}
if $v^*=(Q,N) \in A_3,$ where
$$A_3=\{v^* \in Y^*\;:\;\{(-N^{\alpha\beta})_K\}\text{ is positive definite in } \Omega\},$$

 \begin{equation} \{(-N^{\alpha\beta})_K\}= \left\{
\begin{array}{lr}
 -N^{11}+K  &  -N^{12}
 \\
 -N^{21} & -N^{22}+K
 \end{array} \right\},\end{equation}
and
 $$\overline{(-N^{\alpha\beta})_K}=\{(-N^{\alpha\beta})_K\}^{-1}.$$

Moreover, defining
$$Y^*=Y=L^2(S; \mathbb{R}^2) \times L^2(S;\mathbb{R}^{2 \times 2}),$$
$$Y_1^*=Y_1=L^2(S;\mathbb{R}^{2\times 2}),$$ and $$Y_2^*=Y_2=L^2(S;\mathbb{R}^2),$$

also, $$A^*=A_1 \cap A_2\cap A_3 \cap A_4$$ where,
\begin{eqnarray}
A_1&=&\{v^*=(Q,N) \in Y^*\;:\;-N^{\alpha\beta}|_\beta+b_\lambda^\alpha Q^\lambda
-P^\alpha=0, \text{ in } S\}, \end{eqnarray}

\begin{eqnarray}
A_2&=&\{v^*=(Q,N) \in Y^*\;:\; -b_{\alpha\beta}N^{\alpha\beta}-Q^\alpha_{|\alpha}-P=0, \text{ in } S\}.
\end{eqnarray}
 Moreover,
 $$\{\overline{H}_{\alpha\beta\lambda\mu}\}=\{H^{\alpha\beta\lambda\mu}\}^{-1},$$
 and,
$$\{\overline{h}_{\alpha\beta\lambda\mu}\}=\{h^{\alpha\beta\lambda\mu}\}^{-1}.$$
Also,
\begin{eqnarray}A_4&=&\{v^*=(Q,N) \in Y^*\;:\; \hat{J}^*(z^*)>0, \nonumber \\ &&\; \forall z^* \in Y_2^*, \; \text{ such that } z^* \neq \mathbf{0}\},
\end{eqnarray}
where, \begin{eqnarray}\hat{J}^*(z^*)&=& -F^*(z^*,\mathbf{0})+G^*(z^*,\mathbf{0}) \nonumber \\ &=& -F^*(z^*,\mathbf{0})+\frac{1}{2}\int_S \overline{(-N^{\alpha\beta})_K} z^*_\alpha z^*_\beta\;dS,\; \forall z^* \in Y_2^*.\nonumber\end{eqnarray}
 Finally, denoting
 $$J^*(v^*,z^*)=-F^*(z^*,Q)+G^*(z^*,N), \forall (v^*,z^*) \in A^* \times Y_2^*,$$
 and
 $$\tilde{J}^*(v^*)=\inf_{z^* \in Y_2^*} J^*(v^*,z^*), \forall v^* \in A^*,$$
 suppose there exist $v_0^*=(N_0,Q_0) \in A^*$, $z_0^* \in Y_2^*$ and $u_0 \in U$ such that
 \begin{eqnarray}&&\delta \{J^*(v_0^*,z_0^*)-\langle \theta_{\alpha \beta}(u_0),(N_0)^{\alpha\beta} \rangle_{L^2}
 \nonumber \\ &&+\langle (u_0)_\alpha, P_\alpha \rangle_{L^2}-\langle (\varphi_0)_\alpha, (Q_0)_{\alpha}
 \rangle_{L^2}+\langle w_0, P\rangle_{L^2}\}=0.\end{eqnarray}

 Under such hypotheses,
 \begin{eqnarray} J(u_0)&=& \min_{u \in U} J(u) \nonumber \\ &=& \max_{v^* \in A^*} \tilde{J}^*(v^*)
 \nonumber \\ &=& \tilde{J}^*(v_0^*) \nonumber \\ &=& J^*(v_0^*,z_0^*). \end{eqnarray}

\end{thm}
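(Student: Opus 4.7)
The plan is to mirror the structure of the duality proof established for the Kirchhoff--Love plate, adapted to the differential-geometric setting of the shell. The skeleton has three stages: first, derive the weak duality inequality $J(u) \geq \tilde J^*(v^*)$ valid for every $u \in U$ and $v^* \in A^*$; second, exploit the stationarity assumption on $(v_0^*, z_0^*, u_0)$ to extract the Euler--Lagrange identities; third, substitute these identities back into $J^*(v_0^*, z_0^*)$ and verify that it collapses to $J(u_0)$, closing the duality gap and simultaneously delivering the four equalities in the conclusion.

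For the first stage, I would begin from the Toland--type representation of $G^*$ already derived in the preceding remark, adapted to the shell geometry (with $w_{,\alpha}$ replaced by $\varphi_\alpha(u)$), which produces
\begin{eqnarray}
G^*(z^*,N) &=& \frac{1}{2}\int_S \overline{(-N^{\alpha\beta})_K}\, z^*_\alpha z^*_\beta \, dS \nonumber \\
&& - \frac{1}{2}\int_S \overline{H}_{\alpha\beta\lambda\mu}\, N^{\alpha\beta} N^{\lambda\mu} \, dS.
\end{eqnarray}
Applying the Fenchel inequality $F(u) + F^*(z^*,Q) \geq \langle z^*_\alpha + Q_\alpha, \varphi_\alpha(u)\rangle_{L^2}$ to bound $-F^*(z^*,Q)$ from above, then exploiting the admissibility equations of $A_1$ and $A_2$ by Lagrangian integration by parts on $S$ (where the covariant divergence and the area element $\sqrt{a}\,d\xi$ are invoked), I expect the bound
$$J^*(v^*, z^*) \leq G_1(\gamma(u)) + G_2(\kappa(u)) - \langle u, \mathbf{f}\rangle_{L^2} = J(u),$$
for every $u \in U$, $v^* \in A^*$ and $z^* \in Y_2^*$. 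This yields $\sup_{v^*\in A^*} \tilde J^*(v^*) \leq \inf_{u\in U} J(u)$.

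For the second stage, the first variations of the Lagrangian with respect to $Q$, $z^*$ and $N$ give respectively
\begin{eqnarray}
\frac{\partial F^*(z_0^*, Q_0)}{\partial Q_\alpha} &=& \varphi_\alpha(u_0), \nonumber \\
(z_0^*)_\alpha &=& (N_0)^{\alpha\beta}\, \varphi_\beta(u_0) + K \varphi_\alpha(u_0), \nonumber \\
(N_0)^{\alpha\beta} &=& H^{\alpha\beta\lambda\mu}\, \gamma_{\lambda\mu}(u_0).
\end{eqnarray}
With these identities one substitutes into $G^*(z_0^*, N_0)$, applies the Fenchel equality $F(u_0) + F^*(z_0^*, Q_0) = \langle (z_0^*)_\alpha + (Q_0)_\alpha, \varphi_\alpha(u_0)\rangle_{L^2}$, and uses the expansion $F(u_0) = G_2(\kappa(u_0)) + \tfrac{K}{2}\langle \varphi_\alpha(u_0), \varphi_\alpha(u_0)\rangle_{L^2}$. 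After routine algebraic cancellations the dual value $J^*(v_0^*, z_0^*)$ reduces to $G_1(\gamma(u_0)) + G_2(\kappa(u_0)) - \langle u_0, \mathbf{f}\rangle_{L^2} = J(u_0)$. Since $v_0^* \in A_4$ forces the infimum defining $\tilde J^*(v_0^*)$ to be attained at $z_0^*$, this promotes to $\tilde J^*(v_0^*) = J(u_0)$, which, combined with the weak duality of stage one, forces the full chain of equalities in the thesis.

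The hard part will be the geometric bookkeeping in the integrations by parts of the first stage. Unlike the plate model, the admissibility equations defining $A_1$ and $A_2$ involve the covariant divergence $N^{\alpha\beta}|_\beta$, the curvature couplings $b^\alpha_\lambda Q^\lambda$ and $b_{\alpha\beta} N^{\alpha\beta}$, and the area element $\sqrt{a}\, d\xi$; when these are paired with $u_\alpha$ and $w$ and transposed by parts, the Christoffel corrections that appear must combine precisely with those already encoded in $\theta_{\alpha\beta}(u)$ and $\varphi_\alpha(u)$ in order to reconstruct $\gamma_{\alpha\beta}(u) = \theta_{\alpha\beta}(u) + \tfrac{1}{2}\varphi_\alpha(u)\varphi_\beta(u)$ inside the pairing $\langle N^{\alpha\beta}, \cdot\rangle_{L^2}$. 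The clamped boundary conditions $u_\alpha = w = \partial w/\partial \mathbf n = 0$ on $\partial S$ suppress all surface contributions, and the remaining bulk integrals reassemble cleanly into the primal energy; verifying this cancellation carefully is the principal technical burden distinguishing the shell argument from its planar counterpart.
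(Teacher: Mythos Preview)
Your proposal matches the paper's proof in structure and in every key step: Toland-type bound for $\inf_{z^*}J^*(v^*,z^*)$ via the Fenchel--Young inequality on $F^*$ and the explicit quadratic form of $G^*$, use of the constraints in $A_1,A_2$ through covariant integration by parts to rebuild $\langle N^{\alpha\beta},\gamma_{\alpha\beta}(u)\rangle_{L^2}$, then first variations in $Q,z^*,N$ yielding exactly the three identities you list, and finally the algebraic collapse to $J(u_0)$ together with the $A_4$ condition to identify $\tilde J^*(v_0^*)$. One small caution: the bound you state as holding ``for every $z^*\in Y_2^*$'' is in the paper only asserted after the Toland infimum over $z^*$ (the quadratic part of $G^*$ is not bounded above pointwise in $z^*$), so when you write it out make sure the inequality is for $\inf_{z^*}J^*(v^*,z^*)\leq J(u)$ rather than for each fixed $z^*$.
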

\begin{proof}

Observe that, from the general result in Toland \cite{12}, we have

\begin{eqnarray}
\inf_{z^* \in Y_2^*} J^*(v^*,z^*)&=& \inf_{z^* \in Y_2^*}\{ -F^*(z^*,Q)+G^*(z^*,N)\} \nonumber \\
 &=& \inf_{z^* \in Y_2^*}\left\{ -F^*(z^*,Q)+\frac{1}{2}\int_S \overline{(-N^{\alpha\beta})_K}z^*_\alpha z^*_\beta\;;dx \right.\nonumber \\ && \left.-
\frac{1}{2}\int_S \overline{H}_{\alpha\beta\lambda\mu}N^{\alpha\beta}N^{\lambda \mu}\;dS\right\}
\nonumber \\  &\leq& -\langle Q_\alpha+z^*_\alpha ;,\;\varphi_\alpha(u) \rangle_{L^2} +F(u) \nonumber \\ &&+\langle z^*_\alpha\;,\;\varphi_\alpha(u) \rangle_{L^2}+\frac{1}{2}\langle N^{\alpha\beta} -K \delta_{\alpha\beta}, \varphi_\alpha(u) \varphi_\beta(u)
 \rangle_{L^2}\nonumber \\ && -
\frac{1}{2}\int_S \overline{H}_{\alpha\beta\lambda\mu}N^{\alpha\beta}N^{\lambda \mu}\;dS\nonumber \\ &=& -\langle Q_\alpha , \varphi_\alpha(u) \rangle_{L^2} +F(u)-\frac{1}{2}\langle N^{\alpha\beta} -K \delta_{\alpha\beta}, \varphi_\alpha(u) \varphi_\beta(u)
 \rangle_{L^2}\nonumber \\ &&-
\frac{1}{2}\int_\Omega \overline{H}_{\alpha\beta\lambda\mu}N^{\alpha\beta}N^{\lambda \mu}\;dS
\end{eqnarray}

From this, we have
\begin{eqnarray}
\inf_{z^* \in Y_2^*} J^*(v^*,z^*)
 &\leq& -\langle P , w \rangle_{L^2} +G_2(\kappa(u))+ \sup_{N \in Y_1^*}\left\{\frac{1}{2}\langle N^{\alpha\beta}, \varphi_\alpha(u) \varphi_\beta(u)
 \rangle_{L^2}\right.\nonumber \\ &&-
\left.\frac{1}{2}\int_\Omega \overline{H}_{\alpha\beta\lambda\mu}N^{\alpha\beta}N^{\lambda \mu}\;dS -\langle \theta_{\alpha \beta}(u), N^{\alpha\beta}
\rangle_{L^2}-\langle u_\alpha, P_\alpha \rangle_{L^2}
\rangle_{L^2}\right\}\nonumber \\ &=&-\langle P , w \rangle_{L^2} +G_2(\kappa(u))+ \sup_{N \in Y_1^*}\left\{\frac{1}{2}\langle N^{\alpha\beta}, \theta_{\alpha\beta}(u)
+ \varphi_{\alpha}(u) \varphi_\beta (u)
 \rangle_{L^2}\right.\nonumber \\ &&-
\left.\frac{1}{2}\int_S \overline{H}_{\alpha\beta\lambda\mu}N^{\alpha\beta}N^{\lambda \mu}\;dS -\langle u_\alpha, P_\alpha
\rangle_{L^2}\right\}\nonumber \\ &=& G(\kappa(u))+G_1(\gamma(u))-\langle  w ,P\rangle_{L^2}-\langle u_\alpha, P_\alpha\rangle_{L^2}
\nonumber \\ &=& J(u), \forall u \in U,\; v^*=(Q,N) \in A^*.
\end{eqnarray}

Thus,
\begin{eqnarray}
J(u) &\geq& \inf_{z^* \in Y_2^*}\{ -F^*(z^*,Q)+G^*(z^*,N)\} \nonumber \\ &=& \inf_{z^* \in Y_2^*} J^*(v^*,z^*)
\nonumber \\ &=& \tilde{J}^*(v^*), \end{eqnarray}
$\forall v^* \in A^*,\; u\in U.$

Summarizing,
$$J(u) \geq \tilde{J}^*(v^*),\; \forall u \in U,\; v^* \in A^*,$$
so that,
\begin{equation}\label{usp.55}
\inf_{u \in U} J(u) \geq \sup_{v^* \in A^*} \tilde{J}^*(v^*).
\end{equation}
Finally, assume $v_0^*=(N_0,Q_0) \in A^*$, $z_0^* \in Y_2^*$ and $u_0 \in U$ are such that
\begin{eqnarray}&&\delta \{J^*(v_0^*,z_0^*)-\langle \theta_{\alpha \beta}(u_0),(N_0)^{\alpha\beta} \rangle_{L^2}
 \nonumber \\ &&+\langle (u_0)_\alpha, P_\alpha \rangle_{L^2}-\langle (\varphi_0)_\alpha, (Q_0)_{\alpha}
 \rangle_{L^2}+\langle w_0, P\rangle_{L^2}\}=0.\end{eqnarray}

From the variation in $Q^*$ we get
\begin{eqnarray}\frac{\partial F^*(z_0^*,Q_0)}{\partial Q_\alpha}
= \varphi_\alpha (u_0),
\end{eqnarray}
so that from this and the variation in $z^*$, we get
\begin{eqnarray}\label{ps.88}\frac{\partial G^*(z_0^*,Q)}{\partial z^*_\alpha}  &=&
\frac{\partial F^*(z_0^*,Q_0)}{\partial z^*_\alpha}
\nonumber \\ &=& \frac{\partial F^*(z_0^*,Q_0)}{\partial Q_\alpha} \nonumber \\ &=& \varphi_\alpha (u_0) \nonumber \\ &=& \overline{N^{\alpha\beta}_K}(z_0^*)_{\beta}, \text{ in } \Omega.\end{eqnarray}

Hence,
\begin{eqnarray}F^*(z_0^*,Q_0) &=&\langle (z_0^*)_\alpha+(Q_0)_\alpha,\varphi_\alpha (u_0) \rangle_{L^2}-F(u_0).\end{eqnarray}

Also, from (\ref{ps.88}) we have,
\begin{equation}\label{ps2}(z_0^*)_\alpha=(N_0)^{\alpha\beta}\varphi_\beta (u_0)+K \varphi_\alpha (u_0).\end{equation}

From such results and the variation in $N$ we obtain
$$\frac{(u_0)_{\alpha,\beta}+(u_0)_{\beta,\alpha}}{2}+\frac{\varphi_\alpha (u_0)\varphi_\beta (u_0)}{2}-\overline{H}_{\alpha\beta\gamma\mu}(N_0)^{\lambda\mu}=0,$$
so that
$$(N_0)^{\alpha\beta}=H_{\alpha\beta\lambda\mu}\gamma_{\lambda\mu}(u_0).$$

From these last results we obtain,
\begin{eqnarray}
G^*(z_0^*,N_0)&=&\langle (z_0^*)_\alpha, \varphi_\alpha (u_0)_\alpha \rangle_{L^2}+\frac{1}{2}\left\langle (N_0)^{\alpha\beta}-K\delta_{\alpha\beta},\varphi_\alpha (u_0)\varphi_\beta (u_0) \right\rangle_{L^2}\nonumber \\ &&-\frac{1}{2}\int_\Omega \overline{H_{\alpha\beta\lambda \mu}}(N_0)^{\alpha\beta}(N_0)^{\lambda \mu}\;dS \nonumber \\ &=& \langle (z_0^*)_\alpha, \varphi_\alpha (u_0)_\alpha \rangle_{L^2}+\left\langle (N_0)^{\alpha\beta},\theta_{\alpha\beta}(u_0) +\frac{1}{2}\varphi_\alpha (u_0)\varphi_\beta (u_0) \right\rangle_{L^2}\nonumber \\ &&-\langle P_\alpha, (u_0)_\alpha\rangle_{L^2}-\frac{1}{2}\int_\Omega \overline{H_{\alpha\beta\lambda \mu}}(N_0)^{\alpha\beta}(N_0)^{\lambda \mu}\;dS-\frac{K}{2}
\langle \varphi_\alpha (u_0),\varphi_\alpha (u_0) \rangle_{L^2} \nonumber \\ &=& \langle (z_0^*)_\alpha, \varphi_\alpha (u_0) \rangle_{L^2}+ G_1(\gamma(u_0))-\frac{K}{2}
\langle \varphi_\alpha (u_0),\varphi_\alpha (u_0) \rangle_{L^2} \nonumber \\ &&-\langle P_\alpha, (u_0)_\alpha\rangle_{L^2} .
\end{eqnarray}

Joining the pieces, we obtain
\begin{eqnarray}\label{usp5}J^*(v_0,z_0^*)&=& -F^*(z_0^*,Q_0)+G^*(z_0^*,N_0)
\nonumber \\ &=& F(u_0)+G_1(\gamma(u_0))+\frac{K}{2}\langle \varphi_\alpha (u_0), \varphi_\alpha (u_0) \rangle_{L^2} \nonumber \\ && -\langle P, w_0 \rangle_{L^2}-
 \langle P_\alpha, (u_0)_\alpha\rangle_{L^2}\nonumber \\ &=& G_1(\gamma(u_0))+G_2(\kappa(u_0)) \nonumber \\ && -\langle P, w_0 \rangle_{L^2}-
 \langle P_\alpha, (u_0)_\alpha\rangle_{L^2}  \nonumber \\ &=&J(u_0).\end{eqnarray}

From  this and from $v_0^* \in A_4$, we obtain

$$J(u_0)=J^*(v_0^*,z^*_0)=\inf_{z^* \in Y_2^*} J^*(v^*_0,z^*)=\tilde{J^*}(v_0^*).$$

Therefore, from such a last equality  and (\ref{usp.55}), we may infer that
\begin{eqnarray} J(u_0)&=& \min_{u \in U} J(u) \nonumber \\ &=& \max_{v^* \in A^*} \tilde{J}^*(v^*)
 \nonumber \\ &=& \tilde{J}^*(v_0^*) \nonumber \\ &=& J^*(v_0^*,z_0^*). \end{eqnarray}

 The proof is complete.
 \end{proof}

\section{Conclusion}
In this article, in a first step, we have developed new proofs of global existence of minimizers for the Kirchhoff-Love plate and a shell model
presented in \cite{2900}.

 In a second step, we have developed  duality principles for these same models. In \cite{2900}, the authors developed a duality principle valid for the special case in which the
membrane force tensor at a critical point is positive definite. We have generalized such a result, considering
that in our approach, such a previous case is included but here we do not request the optimal membrane force to be either positive or negative definite. Thus, in some sense, we have complemented the important work developed in \cite{2900}.  We would emphasize, sufficient optimality conditions are presented and the results here developed are applicable to a great variety of problems, including other shell models.


\end{document}